\DeclareMathOperator{\ad}{ad}
\newcommand{\A}{{\mathcal A}}
\newcommand{\E}{{\mathcal E}}
\newcommand{\F}{{\mathcal F}}
\renewcommand{\b}{{\mathfrak b}}
\renewcommand{\c}{{\mathfrak c}}
\newcommand{\g}{{\mathfrak g}}
\newcommand{\J}{{\mathcal J}}
\newcommand{\K}{{\mathcal K}}
\newcommand{\Ncal}{{\mathcal N}}
\newcommand{\C}{\ensuremath{\mathbb{C}}}
\newcommand{\R}{\ensuremath{\mathbb{R}}}
\newcommand{\p}{\partial}
\newcommand{\D}{{\mathcal D}}
\newcommand{\x}{{x_0}}
\newcommand{\Z}{\ensuremath{\mathbb{Z}}}
\newtheorem{lemma}{Lemma}[section]
\newtheorem{definition}{Definition}[section]
\newtheorem{proposition}{Proposition}[section]
\newtheorem{theorem}{Theorem}[section]
\begin{document}

\title[Formal oscillatory distributions]
{Formal oscillatory distributions}
\author[Alexander Karabegov]{Alexander Karabegov}
\address[Alexander Karabegov]{Department of Mathematics, Abilene
Christian University, ACU Box 28012, Abilene, TX 79699-8012}
\email{axk02d@acu.edu}

\subjclass[2010]{81Q20, 53D55}
\keywords{formal oscillatory integral, oscillatory distribution, natural deformation quantization}

\maketitle
\begin{abstract}
We introduce the notion of an oscillatory formal distribution supported at a point. We prove that a formal distribution is given by a formal oscillatory integral if and only if it is an oscillatory distribution that has a certain nondegeneracy property. We give an algorithm that recovers the jet of infinite order of the integral kernel of a formal  oscillatory integral at the critical point from the corresponding formal distribution. We also prove that a star product $\star$ on a Poisson manifold $M$ is natural in the sense of Gutt and Rawnsley if and only if the formal distribution $f \otimes g \mapsto (f \star g)(x)$ is oscillatory for every $x \in M$.
\end{abstract}
\section{Introduction}

According to the stationary phase method, if $\phi$ is a real phase function on $\R^n$ which has a nondegenerate critical point $\x$ with zero critical value, $\phi(\x)=0$, and $f$ is an amplitude supported near $\x$, there exists an asymptotic expansion
\begin{equation}\label{E:stph}
   \left(\frac{i}{\hbar}\right)^{\frac{n}{2}} \int e^{\frac{i}{\hbar} \phi(x)} f(x) \, dx \sim \Lambda_0 (f) + \frac{\hbar}{i} \Lambda_1(f) + \left(\frac{\hbar}{i}\right)^2 \Lambda_2(f) +\ldots
\end{equation}
as $\hbar \to 0$, where $\Lambda_r$ are distributions supported at $\x$ (see \cite{L}). The formal distribution
\begin{equation}\label{E:foif}
   \Lambda = \Lambda_0 + \nu \Lambda_1 + \nu^2 \Lambda_2 + \ldots,
\end{equation}
where we use the formal parameter $\nu$ instead of $\hbar/i$, is a formal oscillatory integral (FOI) in the terminology of \cite{KS} and \cite{LMP10}. It can be defined by simple algebraic axioms expressed in terms of the jet of infinite order of the phase function $\phi$ at $\x$. Moreover, the full jet of $\phi$ at $\x$ is uniquely determined by the formal distribution $\Lambda$. We build an algorithm that allows to recover this jet of infinite order from $\Lambda$.

The class of FOIs introduced in \cite{KS} is more general. It includes the asymptotic expansions of oscillatory integrals where the phase function itself has an asymptotic expansion in $\hbar$ and can be complex, as explained in Section \ref{S:foi}.

In this paper we answer the following question asked by Th. Voronov: given a formal distribution, how to determine whether it is a FOI? To this end we introduce the notion of an oscillatory distribution. It is a formal distribution $\Lambda$ supported at a point $\x$ which in local coordinates is given by the formula
\[
    \Lambda(f) = e^{\nu^{-1}X} f \big|_{x=\x},
\] 
where $X= \nu^2 X_2 + \nu^3 X_3 + \ldots$ is a formal differential operator with constant coefficients such that the order of the differential operator $X_r$ is at most $r$ for all $r \geq 2$. It turns out that this property does not depend on the choice of local coordinates. We show that a formal distribution is a FOI if and only if it is an oscillatory distribution  that has a certain nondegeneracy property. 

In \cite{GR} Gutt and Rawnsley singled out an important class of star products which they call {\it natural}. For each $r \geq 1$, the bidifferential operator $C_r$ for a natural star product is of order at most $r$ in both arguments (see details in Section \ref{S:nat}). All classical star products are natural. We will prove that a star product $\star$ on a Poisson manifold $M$ is natural if and only if the formal distribution
\[
    \Lambda_x (f \otimes g) = (f \star g)(x)
\]
on $M^2$ supported at $(x,x)$ is oscillatory for every $x$. 

These results belong to the general framework of formal asymptotic Lagrangian analysis. Various semiclassical and quantum aspects of this analysis are developed in the work on formal symplectic groupoids by Cattaneo, Dherin, and Felder \cite{CDF} and the author \cite{CMP3}, symplectic microgeometry by Cattaneo, Dherin, and Weinstein \cite{CDW},   Lagrangian analysis by Leray \cite{L},  the theory of oscillatory modules by Tsygan \cite{T}, and microformal analysis by Th. Voronov \cite{V}.

{\bf Acknowledgements} I am very grateful to A. Alekseev, H. Khudaverdian, B. Tsygan, and Th. Voronov for important discussions and for the opportunity to present a part of this work at two conferences and during a visit to the University of Geneva in 2019.

\section{Factorization}

In this section we prove an elementary factorization result on pronilpotent Lie groups in filtered associative algebras which is the technical backbone of this paper. 

Let $\A$ be a filtered associative unital algebra over $\C$ with descending filtration $\A=\A_0 \supset\A_1 \supset \ldots$ such that $\bigcap_i \A_i = \{0\}$. We denote by $d(a)$ the filtration degree of $a \in \A$ so that $d(a)= k$ for $a \in \A_k \setminus A_{k+1}$. We assume that this algebra is complete with respect to the norm $|a| = 2^{-d(a)}$. Then any series $\sum_i a_i$ with $a_i \in \A$ such that $|a_i| \to 0$ is convergent. 

Let $\g \subset \A_1$ be a Lie algebra with respect to the commutator $[a,b]=ab-ba$. Then $\g$ is pronilpotent and $\exp \g \subset \A_0$ is the corresponding Lie group. 
Each element $g \in \exp \g$ is uniquely represented as
\begin{equation}\label{E:exp1}
g = \exp \gamma = \sum_{n=0}^\infty \frac{1}{n!}\gamma^n
\end{equation}
for some $\gamma \in \g$. Then $g-1 \in \A_1$ and
\begin{equation}\label{E:log1}
\gamma = \log (1- (1-g)) =-\sum_{n=1}^\infty \frac{1}{n}(1-g)^n.
\end{equation}

We set $\g_i := \g \cap \A_i$ for $i \geq 1$. The following statement is a consequence of formulas (\ref{E:exp1}) and (\ref{E:log1}).
\begin{lemma}\label{L:congr}
If $\gamma \in \g$, then $(\exp \gamma) - 1 \in \A_i$ if and only if $\gamma \in \g_i$.
\end{lemma}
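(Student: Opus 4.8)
The plan is to read both implications straight off the two series (\ref{E:exp1}) and (\ref{E:log1}), using only two structural facts about $\A$. First, the filtration is multiplicative ($\A_j\A_k \subseteq \A_{j+k}$), so that if $a \in \A_i$ with $i \geq 1$, then every power $a^n$ with $n \geq 1$ lies in $\A_{ni} \subseteq \A_i$. Second, each $\A_i$ is closed in the norm $|a| = 2^{-d(a)}$, being the closed ball $\{a : |a| \leq 2^{-i}\}$; hence a convergent series all of whose terms lie in $\A_i$ has its sum in $\A_i$.

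For the ``if'' direction, assume $\gamma \in \g_i$, i.e.\ $\gamma \in \g$ and $\gamma \in \A_i$. By (\ref{E:exp1}), $(\exp\gamma) - 1 = \sum_{n\geq 1}\frac{1}{n!}\gamma^n$; every term lies in $\A_{ni}\subseteq \A_i$, the filtration degrees of the terms tend to infinity so the series converges by completeness, and therefore the sum lies in $\A_i$. For the ``only if'' direction, put $g = \exp\gamma$ and note that $(\exp\gamma) - 1 \in \A_i$ is equivalent to $1-g \in \A_i$. Then (\ref{E:log1}) expresses $\gamma = -\sum_{n\geq 1}\frac{1}{n}(1-g)^n$; each $(1-g)^n$ lies in $\A_{ni}\subseteq\A_i$, and the same convergence-and-closedness argument gives $\gamma \in \A_i$, whence $\gamma \in \g\cap\A_i = \g_i$.

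There is no genuine obstacle here: the only point requiring a word of justification is passing the condition ``$\in \A_i$'' through an infinite sum, which is exactly what the completeness of $\A$ together with the closedness of the subspaces $\A_i$ provides. (Alternatively, one can observe that modulo $\A_i$ the exponential and logarithm series are finite and mutually inverse, so they induce mutually inverse bijections between $\g/\g_i$ and the image of $1+\g$ in $\A/\A_i$ that are compatible with reduction mod $\A_i$; this merely repackages the same computation.)
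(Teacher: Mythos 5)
Your proof is correct and follows exactly the route the paper indicates: the paper states the lemma "is a consequence of formulas (\ref{E:exp1}) and (\ref{E:log1})" without further elaboration, and your argument supplies precisely the expected details — reading each implication off the exponential and logarithm series and passing $\A_i$-membership through the sums via multiplicativity of the filtration and closedness of the $\A_i$.
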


Suppose that $\g$ is a direct sum of subalgebras $\mathfrak{a}$ and $\mathfrak{b}$ such that $\g_i=\mathfrak{a}_i \oplus \mathfrak{b}_i$, where $\mathfrak{a}_i := \mathfrak{a} \cap \A_i$ and $\mathfrak{b}_i := \mathfrak{b} \cap \A_i$, for all $i \geq 1$.
\begin{proposition}\label{P:factor}
 Any element $g \in \exp \g$ can be uniquely factorized as $g=ab$ with $a \in \exp \mathfrak{a}$ and $b \in \exp \mathfrak{b}$.
\end{proposition}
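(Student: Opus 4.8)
The plan is to construct the factorization $g = ab$ by successive approximation through the filtration, using Lemma 2.1 as the bookkeeping device. I would argue that since $\bigcap_i \A_i = \{0\}$ and the algebra is complete, it suffices to build, for each $i \geq 1$, elements $a^{(i)} \in \exp \mathfrak{a}$ and $b^{(i)} \in \exp \mathfrak{b}$ such that $g (a^{(i)} b^{(i)})^{-1} - 1 \in \A_i$, in a way that is coherent as $i$ increases, and then pass to the limit. The inductive step is where the hypothesis $\g_i = \mathfrak{a}_i \oplus \mathfrak{b}_i$ enters: given $a^{(i)}, b^{(i)}$ with $g(a^{(i)}b^{(i)})^{-1} - 1 \in \A_i$, write $g(a^{(i)}b^{(i)})^{-1} = \exp \delta$ with $\delta \in \g$, so by Lemma 2.1 in fact $\delta \in \g_i$; split $\delta = \alpha + \beta$ with $\alpha \in \mathfrak{a}_i$, $\beta \in \mathfrak{b}_i$, and set $a^{(i+1)} = (\exp\alpha)\, a^{(i)}$... — here one must be slightly careful about the order of multiplication, since $\exp(\alpha+\beta)$ is not $\exp\alpha\exp\beta$, but the discrepancy lies in $\A_{2i} \subset \A_{i+1}$, so correcting $a^{(i)}$ and $b^{(i)}$ by the appropriate factors $\exp\alpha$, $\exp\beta$ (on the correct sides) yields $g(a^{(i+1)}b^{(i+1)})^{-1} - 1 \in \A_{i+1}$ by Lemma 2.1 applied in the other direction. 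Taking $\gamma_a = \sum_i \alpha_i$ and $\gamma_b = \sum_i \beta_i$, which converge since $\alpha_i, \beta_i \in \A_i$, gives the desired $a = \exp\gamma_a \in \exp\mathfrak{a}$ and $b = \exp\gamma_b \in \exp\mathfrak{b}$.

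For uniqueness I would suppose $ab = a'b'$ with $a,a' \in \exp\mathfrak{a}$, $b,b' \in \exp\mathfrak{b}$, so that $a'^{-1}a = b'b^{-1}$ lies in $\exp\mathfrak{a} \cap \exp\mathfrak{b}$, and show this intersection is trivial. Writing $a'^{-1}a = \exp\alpha = \exp\beta$ with $\alpha \in \mathfrak{a}$, $\beta \in \mathfrak{b}$, the injectivity of $\exp$ on $\g$ (from formulas (2.1) and (2.2), which identify $\exp$ and $\log$ as mutually inverse bijections) forces $\alpha = \beta \in \mathfrak{a} \cap \mathfrak{b} = \{0\}$, hence $a = a'$ and $b = b'$.

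I expect the main obstacle to be the bookkeeping in the inductive step: one must track on which side each correction factor is multiplied and verify that the error term genuinely drops from filtration degree $i$ to $i+1$, using both the Baker–Campbell–Hausdorff-type estimate $\exp\alpha\exp\beta \equiv \exp(\alpha+\beta) \bmod \A_{i+1}$ (valid because $\alpha,\beta \in \A_i$ and $i \geq 1$ so $[\alpha,\beta] \in \A_{2i}$) and Lemma 2.1 to translate between statements about $\exp\gamma - 1$ and statements about $\gamma$. The assumption $\g_i = \mathfrak{a}_i \oplus \mathfrak{b}_i$ — rather than merely $\g = \mathfrak{a} \oplus \mathfrak{b}$ — is exactly what makes the splitting $\delta = \alpha + \beta$ respect the filtration at each stage, so it is essential and should be invoked explicitly at that point.
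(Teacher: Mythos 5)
Your high-level strategy is exactly the paper's: build $a$ and $b$ by successive approximation along the filtration, at each stage splitting an error term $\delta \in \g_i$ into $\alpha + \beta$ via the hypothesis $\g_i = \mathfrak{a}_i \oplus \mathfrak{b}_i$, using Lemma \ref{L:congr} to pass between statements about $\gamma$ and statements about $e^\gamma - 1$, and proving uniqueness from $\exp\mathfrak{a} \cap \exp\mathfrak{b} = \{1\}$ via injectivity of $\exp$ and $\mathfrak{a}\cap\mathfrak{b}=\{0\}$. That part is all correct.

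However, the specific bookkeeping quantity you propose to track, $g(a^{(i)}b^{(i)})^{-1}$, will not close the iteration, and this is not a mere nuisance about ``which side'' — it is structural. Writing $g(a^{(i)}b^{(i)})^{-1} = e^{\delta}$ gives $g = e^{\delta}\, a^{(i)} b^{(i)}$, so the whole error $e^{\delta} \approx e^{\alpha}e^{\beta}$ sits to the \emph{left} of $a^{(i)} b^{(i)}$. You can absorb $e^{\alpha}$ into $a^{(i)}$ (set $a^{(i+1)} = e^{\alpha} a^{(i)}$), but then $e^{\beta}$ is separated from $b^{(i)}$ by the full factor $a^{(i)}$, and pushing $e^{\beta}$ across $a^{(i)}$ replaces $\beta$ by $(a^{(i)})^{-1}\beta\, a^{(i)}$, which need not lie in $\mathfrak{b}$ (conjugation by $\exp\mathfrak{a}$ does not preserve the splitting $\g = \mathfrak{a}\oplus\mathfrak{b}$). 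So after absorbing $e^{\alpha}$, the remaining error is still at degree $i$, not $i+1$, and the induction stalls. The paper avoids this by tracking the \emph{sandwiched} quantity $(a^{(i)})^{-1} g\, (b^{(i)})^{-1} = e^{\gamma_i}$, i.e.\ writing $g = a^{(i)} e^{\gamma_i} b^{(i)}$. Then $\gamma_i = \alpha_i + \beta_i$ puts $e^{\alpha_i}$ immediately to the right of $a^{(i)}$ and $e^{\beta_i}$ immediately to the left of $b^{(i)}$, so both corrections absorb without any cross-conjugation, and $e^{-\alpha_i} e^{\gamma_i} e^{-\beta_i} = e^{\gamma_{i+1}}$ with $\gamma_{i+1}$ (in fact) of filtration degree $2 \cdot 2^i$, by the commutator estimate you cite. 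If you replace your tracking quantity by this sandwiched one, your argument becomes the paper's proof.
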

\begin{proof}
Given  $g = \exp \gamma_0 \in \exp \g$ for some $\gamma_0 \in \g_1=\g$, we can represent $\gamma_0$ uniquely as $\gamma_0= \alpha_0+\beta_0$ for some $\alpha_0 \in  \mathfrak{a}_1$ and $\beta_0 \in  \mathfrak{b}_1$.  It follows from Lemma \ref{L:congr} that
\[
   e^{-\alpha_0} e^{\gamma_0} e^{-\beta_0} = e^{\gamma_1}
\]
for some $\gamma_1 \in \g_2$. Then $\gamma_1 = \alpha_1+\beta_1$ for $\alpha_1\in \mathfrak{a}_2$ and $\beta_1 \in \mathfrak{b}_2$. Repeating this process, we obtain sequences $\{\alpha_i\}, \{\beta_i\}$, and $\{\gamma_i\}$ with $\alpha_i \in \mathfrak{a}_{2^i}, \beta_i \in \mathfrak{b}_{2^i}$, and $\gamma_i \in \mathfrak{g}_{2^i}$ such that $\gamma_i=\alpha_i+\beta_i$ and
\[
  e^{-\alpha_i} e^{\gamma_i} e^{-\beta_i} = e^{\gamma_{i+1}}.
\]
We get that
\[
g = e^{\gamma_0} = e^{\alpha_0} e^{\gamma_1}e^{\beta_0} = e^{\alpha_0}e^{\alpha_1} e^{\gamma_2} e^{\beta_1}e^{\beta_0} =\ldots
\]
It follows that $g=ab$, where $a \in \exp \mathfrak{a}$ and $b \in \exp \mathfrak{b}$ are given by the convergent infinite products
\[
    a = e^{\alpha_0}e^{\alpha_1} e^{\alpha_2} \ldots \mbox{ and } b = \ldots e^{\beta_2} e^{\beta_1}e^{\beta_0}.
\]
The representation $g=ab$ is unique because $\exp \mathfrak{a} \cap \exp \mathfrak{b}=\{1\}$.
\end{proof}

Throughout this paper, we will apply Proposition \ref{P:factor} several times in different contexts. Each time we will reuse the same notations for a filtered associative algebra $\A$ and a pronilpotent Lie algebra $\g \subset \A_1$.

\section{Some classes of formal distributions and operators}

Let $M$ be a real manifold and $\x$ be a point in $M$. We denote by $\mathbb{D}(M)$ the algebra of differential operators on $M$, by $\mathbb{D}_{\x}(M)$ the space of all distributions on $M$ supported at $\x$, and by $\delta_{\x}$ the Dirac distribution at $\x$ ($\delta_{\x} (f)=f(\x)$). The mapping
\[
     A \mapsto \delta_{\x} \circ A, 
\]
from $\mathbb{D}(M)$ to $\mathbb{D}_{\x}(M)$ is surjective.

Let $\nu$ be a formal parameter. We say that a $\nu$-formal differential operator
\[
  A = A_0 + \nu A_1 + \ldots \in \mathbb{D}(M)[[\nu]]
\]
 is {\it natural} if the order of $A_r$ is at most $r$ for all $r\geq 0$. If $U$ is a coordinate chart on $M$ with coordinates $\{x^i\}$, a natural operator $A$ on $U$ can be uniquely written as
\[
   A = \sum_{r=0}^\infty f_r^{i_1 \ldots i_r}(\nu,x)\left(\nu \p_{i_1}\right) \ldots \left(\nu \p_{i_r}\right),
\]
where $f_r^{i_1 \ldots i_r} \in C^\infty(U)[[\nu]]$ is symmetric in $i_1, \ldots, i_r$ for each $r \geq 0$ and $\p_i = \p/\p x^i$.

The natural operators on $M$ form an associative algebra. If $A$ and $B$ are natural operators, then the operator $\nu^{-1}[A,B]$ is natural. Therefore, the formal differential operators of the form $\nu^{-1}A$, where $A$ is natural, form a Lie algebra with respect to the commutator $[A,B]=AB-BA$.

\begin{definition}
A formal differential operator $A \in \mathbb{D}(M)[[\nu]]$ is called oscillatory if it is represented as $A=\exp(\nu^{-1} X)$, where $X= \nu^2 X_2 + \nu^3 X_3 + \ldots$ is a natural operator.
\end{definition}
\begin{definition}
A formal distribution $\Lambda  \in \mathbb{D}_{\x}(M)[[\nu]]$ is called oscillatory if there exists an oscillatory operator $A$ such that $\Lambda=\delta_{\x}\circ A$.
\end{definition}

Assume that $\Lambda = \Lambda_0 + \nu \Lambda_1 + \ldots$ is an oscillatory distribution  on~$M$ supported at $\x$ and represented as $\Lambda=\delta_{\x}\circ \exp (\nu^{-1}X)$, where $X= \nu^2 X_2 + \nu^3 X_3 + \ldots$ is natural. Then $\Lambda_0=\delta_{\x}$ and $\Lambda_1 = \delta_{\x} \circ X_2$. Since $X_2$ is a differential operator of order at most~2, there exists a unique symmetric bilinear form $\beta_{\Lambda}$ on $T_{\x}^\ast M$ such that
\[
     \beta_{\Lambda} (df(\x),dg(\x)) = \Lambda_1(fg)
\]
for any functions $f$ and $g$ on $M$ such that $f(\x)=g(\x)=0$.
The form $\beta_\Lambda$ is a coordinate-free object. Let $U \subset M$ be a coordinate neighborhood of $\x$  with coordinates $\{x^i\}$. If $X_2 = a^{ij} \p_i\p_j + b^i\p_i + c$, then
\[
   \beta_{\Lambda} (df(\x),dg(\x)) = 2 a^{ij} \p_i f \p_j g \big|_{x=\x}.
\]
The form $\beta_{\Lambda}$ is thus given by the tensor $2a^{ij}(\x)$.
\begin{definition}
An oscillatory distribution $\Lambda$ is called nondegenerate if the bilinear form $\beta_{\Lambda}$ is nondegenerate.
\end{definition}

If $\Lambda$ is a distribution on a coordinate neighborhood $U$ of $\x$ supported at $\x$, there exists a unique differential operator $C$ with constant coefficients such that $\Lambda=\delta_{\x}\circ C$. 
 We will need the following fact. 

\begin{lemma}\label{L:repr}
Any differential operator $A$ on $U$ can be uniquely represented as a sum $A=B+C$ of differential operators such that $\delta_{\x} \circ B=0$ and $C$ has constant coefficients.
\end{lemma}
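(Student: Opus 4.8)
The plan is to obtain $C$ by freezing the coefficients of $A$ at the point $\x$. In the coordinates $\{x^i\}$ on $U$ write $A$ as a finite sum
\[
  A = \sum_{\alpha} a_\alpha(x)\, \p^\alpha, \qquad a_\alpha \in C^\infty(U),
\]
where $\alpha = (\alpha_1,\dots,\alpha_n)$ runs over multi-indices, $\p^\alpha = \p_1^{\alpha_1}\cdots\p_n^{\alpha_n}$, and $a_\alpha = 0$ for $|\alpha|$ larger than the order of $A$. Put $C := \sum_\alpha a_\alpha(\x)\,\p^\alpha$, which is a differential operator with constant coefficients, and $B := A - C = \sum_\alpha \bigl(a_\alpha(x)-a_\alpha(\x)\bigr)\,\p^\alpha$. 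For every $f \in C^\infty(U)$ we have $(Bf)(\x) = \sum_\alpha\bigl(a_\alpha(\x)-a_\alpha(\x)\bigr)(\p^\alpha f)(\x) = 0$, so $\delta_{\x}\circ B = 0$. This establishes existence.

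For uniqueness it is enough to show that a constant-coefficient operator $D = \sum_\alpha c_\alpha \p^\alpha$ (with $c_\alpha \in \C$) satisfying $\delta_{\x}\circ D = 0$ must vanish: if $A = B + C = B' + C'$ are two admissible decompositions, then $D := C - C' = B' - B$ has constant coefficients and $\delta_{\x}\circ D = 0$. To see that $D = 0$, apply it to the Taylor monomials centered at $\x$, namely $m_\beta(x) := \tfrac{1}{\beta!}\prod_i (x^i - x_0^i)^{\beta_i}$. Since $(\p^\alpha m_\beta)(\x) = \delta_{\alpha\beta}$, the equality $(D m_\beta)(\x) = 0$ forces $c_\beta = 0$ for each multi-index $\beta$. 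Hence $C = C'$ and $B = B'$.

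There is essentially no obstacle in this argument; the lemma is elementary. The only point that needs a word of care is the uniqueness step, which rests on the fact that a constant-coefficient differential operator is completely recovered from its composition with $\delta_{\x}$ — equivalently, that the monomials $m_\beta$ form a family dual to the operators $\p^\alpha$. Note also that, since differential operators are of finite order by definition, all the sums above are finite, so no convergence question arises.
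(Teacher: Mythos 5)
Your proof is correct and follows essentially the same route as the paper: freeze the coefficients of $A$ at $\x$ to get $C$, set $B := A - C$, and verify $\delta_{\x}\circ B = 0$. The paper's proof leans on the previously stated fact that a distribution supported at $\x$ has a unique constant-coefficient representative, whereas you supply the short verification of that uniqueness (via the dual family of centered monomials); this is a harmless and useful amount of extra detail, not a different method.
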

\begin{proof}
Let $C$ be the unique differential operator with constant coefficients such that
\[
     \delta_{\x} \circ C = \delta_{\x} \circ A.
\]
Set $B:= A-C$. Then $\delta_{\x} \circ B=0$ and $A=B+C$.
\end{proof}

Any differential operator $A$ on $U$ can be uniquely represented in the normal form,
\[
     A = \sum_{r=0}^N  A^{i_1 \ldots i_r}(x)  \p_{i_1} \ldots \p_{i_r},
\]
where $A^{i_1 \ldots i_r}(x) \in C^\infty(U)$ is symmetric in $i_1, \ldots,i_r$. Then $A=B+C$, where
\[
    B =  \sum_{r=0}^N  \left(A^{i_1 \ldots i_r}(x) - A^{i_1 \ldots i_r}(\x)\right)   \p_{i_1} \ldots \p_{i_r}
\]
is such that $\delta_{\x} \circ B=0$ and
\[
     C=\sum_{r=0}^N  A^{i_1 \ldots i_r}(\x)  \p_{i_1} \ldots \p_{i_r}
\]
has constant coefficients. 

We fix a coordinate chart $U$ and consider the algebra $\A : = \mathbb{D}(U)[[\nu]]$ of formal differential operators on $U$ equipped with the $\nu$-filtration (the filtration degree of $\nu$ is~1).
 Let $\g \subset \A_1$ be the Lie algebra of formal differential operators on $U$ of the form $\nu^{-1} X$, where $X= \nu^2 X_2 + \nu^3 X_3 + \ldots$ is a natural operator. This is a pronilpotent Lie algebra with respect to the $\nu$-filtration. A distribution $\Lambda$ on $U$ supported at a point $\x$ is oscillatory if there exists an element $A \in \g$ such that $\Lambda = \delta_{\x} \circ \exp (A)$. The following proposition provides a criterion that a given formal distribution supported at a point is oscillatory.

\begin{proposition}\label{P:osciff}
 Let $\Lambda$ be a formal distribution on $U$ supported at a point $\x$. If $C$ is the unique formal differential operator with constant coefficients such that
 \[
\Lambda = \delta_{\x} \circ \exp (C),
\]
 then $\Lambda$ is oscillatory if and only if $C \in \g$.
\end{proposition}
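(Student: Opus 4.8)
The plan is to derive the nontrivial implication from the factorization Proposition~\ref{P:factor}, applied to a decomposition of $\g$ into its ``constant coefficient'' part and the part annihilated on the left by $\delta_\x$.

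One direction is immediate: if $C\in\g$, then the very representation $\Lambda=\delta_\x\circ\exp(C)$ exhibits $\Lambda$ as oscillatory. For the converse, assume $\Lambda=\delta_\x\circ\exp(A)$ with $A\in\g$, and set $\mathfrak a:=\{B\in\g: B\text{ has constant coefficients}\}$ and $\mathfrak b:=\{B\in\g:\delta_\x\circ B=0\}$. I would first verify that $\g=\mathfrak a\oplus\mathfrak b$ satisfies the hypotheses of Proposition~\ref{P:factor}. The splitting of a general $B\in\g$ is obtained by applying Lemma~\ref{L:repr} to each $\nu$-coefficient of $B$ separately; since passing to the constant-coefficient part merely freezes the normal-form coefficients at $\x$, and this does not raise the order of a differential operator, the constant-coefficient part of a natural operator is again natural, so both summands of $B$ lie in $\g$ and one reads off $\g_i=\mathfrak a_i\oplus\mathfrak b_i$ for every $i$. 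That $\mathfrak a$ is a Lie subalgebra is clear. For $\mathfrak b$ the key remark is that $\{D\in\mathbb{D}(U):\delta_\x\circ D=0\}$ is a left ideal of $\mathbb{D}(U)$: if $(Dg)(\x)=0$ for all $g$, then $(DEf)(\x)=(D(Ef))(\x)=0$ for all $E$ and $f$. Hence $\delta_\x\circ(B_1B_2)=0$ whenever $\delta_\x\circ B_1=0$, and in particular $\mathfrak b$ is closed under the commutator.

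Granting these checks, I would apply Proposition~\ref{P:factor} to $\g=\mathfrak b\oplus\mathfrak a$ (the summands taken in this order, so that the $\mathfrak b$-factor appears on the left) and factor $\exp(A)=b\,a$ with $b=\exp(\beta)$, $\beta\in\mathfrak b$, and $a=\exp(\tilde C)$, $\tilde C\in\mathfrak a\subseteq\g$ of constant coefficients. From $\delta_\x\circ\beta=0$ and the left-ideal property we get $\delta_\x\circ\beta^{\,n}=0$ for all $n\geq1$, hence $\delta_\x\circ(b-1)=0$, i.e.\ $(bg)(\x)=g(\x)$ for every function $g$. Consequently $\Lambda(f)=\bigl(b(af)\bigr)(\x)=(af)(\x)=(\delta_\x\circ a)(f)$ for all $f$, so $\Lambda=\delta_\x\circ\exp(\tilde C)$ with $\tilde C$ of constant coefficients; by the uniqueness of $C$ we conclude $C=\tilde C\in\g$.

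The step I expect to be the main obstacle is the bookkeeping in the second paragraph: confirming that the constant-coefficient projection is compatible with both the order bound defining a natural operator and the $\nu$-filtration, and that the $\delta_\x$-vanishing operators form a genuine Lie subalgebra (not merely a linear subspace) of $\g$, so that the decomposition $\g=\mathfrak a\oplus\mathfrak b$ really falls under Proposition~\ref{P:factor}. After that, the argument is pure formal manipulation with $\exp$ together with the uniqueness already recorded in the text.
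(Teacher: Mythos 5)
Your proof is correct and follows essentially the same route as the paper: decompose $\g$ into the subalgebra $\mathfrak b$ of operators killed on the left by $\delta_\x$ and the subalgebra $\mathfrak a=\mathfrak c$ of constant-coefficient operators, use Lemma~\ref{L:repr} to get $\g=\mathfrak b\oplus\mathfrak a$ compatibly with the $\nu$-filtration, then apply Proposition~\ref{P:factor} to factor $\exp(A)=\exp(B)\exp(\tilde C)$ and observe that $\delta_\x\circ\exp(B)=\delta_\x$. The only difference is that you spell out the checks the paper takes for granted (that $\mathfrak b$ is a Lie subalgebra via the left-ideal property, and that freezing coefficients at $\x$ preserves naturality and filtration degree), which are correct and worth recording but do not change the argument.
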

\begin{proof}
If $C \in \g$, then $\Lambda$ is oscillatory. Now assume that $\Lambda$ is oscillatory. Let $\b$ be the Lie algebra of formal differential operators $A \in \g$ such that $\delta_{\x} \circ A=0$. Denote by $\c$ the Lie algebra of the formal differential operators with constant coefficients from $\g$. Lemma \ref{L:repr} implies that $\g=\b\oplus \c$ and $\g_i=\b_i\oplus \c_i$ for all $i \geq 1$ for the corresponding $\nu$-filtration spaces. Notice that the algebras $\g$ and $\b$ are coordinate-free objects, while the complementary algebra $\c$ depends on the choice of coordinates on $U$. Since $\Lambda$ is oscillatory, $\Lambda = \delta_{\x} \circ \exp (A)$ for some $A \in \g$. It follows from Proposition \ref{P:factor} that there exist unique elements $B \in \b$ and $C \in \c$ such that $e^A = e^B e^C$. Then $\delta_{\x} \circ \exp B=\delta_{\x}$ and
\[
    \Lambda = \delta_{\x} \circ \exp (A) = \delta_{\x} \circ \left(\exp (B) \exp(C)\right) = \delta_{\x} \circ \exp (C).
\]
\end{proof}

\section{Natural star products}\label{S:nat}

 Given a vector space $V$, we denote by $V((\nu))$ the space of formal vectors
\[
    v = \nu^r v_r + \nu^{r+1} v_{r+1} + \ldots,
\]
where $r \in \Z$ and $v_i \in V$ for all $i \geq r$.

Let $M$ be a Poisson manifold with Poisson bracket $\{\cdot,\cdot\}$. A star product $\star$ on $M$ is an associative product on $C^\infty(M)((\nu))$ given by the formula
\begin{equation}\label{E:star}
   f \star g = fg + \sum_{r=1}^\infty \nu^r C_r(f,g),
\end{equation}
where $C_r$ are bidifferential operators on $M$ for $r \geq 1$ and $C_1(f,g)-C_1(g,f)=\{f,g\}$ (see \cite{BFFLS}). We assume that the unit constant 1 is the unity for the star product, $f \star 1 = f = 1 \star f$ for all $f$. Given $f,g \in C^\infty(M)((\nu))$, denote by $L_f$ the operator of left star multiplication by $f$ and by $R_g$ the operator of right star multiplication by $g$ so that
\[
    L_f g = f \star g = R_g f.
\]
The associativity of the star product $\star$ is equivalent to the condition that $[L_f,R_g]=0$ for any $f,g$. The mapping $f \mapsto L_f$ is an injective homomorphism from the star algebra $(C^\infty(M)((\nu)), \star)$ to the algebra $\mathbb{D}(M)((\nu))$ of formal differential operators on $M$. It has a left inverse mapping $A \mapsto A1$ (which is not a homomorphism on the whole algebra $\mathbb{D}(M)((\nu))$),
\[
L_f \mapsto L_f 1 = f \star 1 = f.
\]

Gutt and Rawnsley introduced in \cite{GR} an important notion of a natural star product. A star product (\ref{E:star}) is natural if the bidifferential operator $C_r$ is of order not greater than $r$ in both arguments for every $r \geq 1$. Equivalently, a star product $\star$ is natural if the operators $L_f$ and $R_f$ are natural for all $f \in C^\infty(M)$. Then $L_f$ and $R_f$ are natural for all $f \in C^\infty(M)[[\nu]]$. All classical star products (Moyal-Weyl, Wick, Fedosov, and Kontsevich star products) are natural (see \cite{GR}, \cite{F1}, and \cite{K}). We give an equivalent description of natural star products in terms of oscillatory distributions in Theorem \ref{T:grosc} below. To prove this theorem, we need some preparations.

Let $t_1, \ldots, t_n$ be formal parameters, where $n$ is any number,  and
\[
\A:= \left(\mathbb{D}(M)((\nu))\right)[[t_1, \ldots, t_n]]
\]
be the associative algebra of formal differential operators on $M$ of the form
\begin{equation}\label{E:nutop}
   A=   \sum_{k=0}^\infty t_{j_1} \ldots t_{j_k} A^{j_1\ldots j_k},
\end{equation}
where $A^{j_1\ldots j_k} \in \mathbb{D}((\nu))$ are $\nu$-formal differential operators on $M$ symmetric in $j_1, \ldots,j_k$. We equip $\A$ with the $t$-filtration $\{\A_i\}$ for which the filtration degree of $t_i$ is 1 for every $i$ (and the filtration degree of $\nu$ is zero).  We say that an operator (\ref{E:nutop}) is natural if all operators $A^{j_1\ldots j_k}$ are natural. The algebra $\A$ acts on the space $\F:= \left(C^\infty(M)((\nu))\right)[[t_1, \ldots,t_n]]$ equipped with the $t$-filtration $\{\F_i\}$. The space $\F$ is a commutative algebra with respect to the ``pointwise" multiplication of formal series. Given $f\in \F$, we denote by $m_f$ the multiplication operator by $f$. Then $m_f \in \A$ and $m_f 1=f$. Each operator $A\in \A$ is uniquely represented as the sum
\begin{equation}\label{E:abc}
   A = m_{A1} + (A-m_{A1}),
\end{equation}
where $A-m_{A1}$ annihilates constants, $(A-m_{A1})1=0$.

Let $\g \subset \A_1$ be the Lie algebra of operators of positive $t$-filtration degree of the form $\nu^{-1}A$, where $A \in \A$ is natural. The Lie algebra $\g$ is pronilpotent with respect to the $t$-filtration $\{\g_i\}$, where $i \geq 1$. Its Lie group is $\exp\g \subset \A_0$.

Denote by $\mathfrak{a}$ the commutative subalgebra of $\g$ of multiplication operators and by $\mathfrak{b}$ the subalgebra of $\g$ of operators that annihilate constants. Then 
$\g = \mathfrak{a}\oplus\mathfrak{b}$ and $\g_i = \mathfrak{a}_i\oplus\mathfrak{b}_i$ for all $i \geq 1$ in accordance with the representation (\ref{E:abc}). Let $\mathcal{G}$ be the set of formal functions
\[
   f=\nu^{-1}f_{-1} + f_0 +\nu f_1 + \ldots
\] 
from $\F_1$. Then $\mathfrak{a}=\{m_f | f \in \mathcal{G}\}$. Given $f \in \mathcal{G}$, the exponential series
\[
   e^f = 1 + f + \frac{1}{2} f^2 + \ldots 
\]
defines an element of $\F_0$ and $\exp\mathfrak{a}= \{m_{e^f}|f \in \mathcal{G}\}$. We set
\[
\exp\mathcal{G} := \{ e^f| f \in  \mathcal{G}\} \subset \F_0.
\]
It is the Lie group of the commutative Lie algebra $\mathcal{G}$. The mapping $a \mapsto a1$ is a group isomorphism from $\exp \mathfrak{a}$ onto $\exp\mathcal{G}$.
\begin{lemma}\label{L:expfone}
For each $g \in \exp \g$, the operator $g$ leaves invariant the set $\exp  \mathcal{G}$. In particular, $g1 \in \exp  \mathcal{G}$.
\end{lemma}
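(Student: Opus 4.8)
The plan is to reduce the invariance statement to its special case $g1\in\exp\mathcal{G}$, and to prove that case by the factorization of Proposition \ref{P:factor}.

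First I would record that $\exp\mathcal{G}=\{a1\mid a\in\exp\mathfrak{a}\}$: indeed $\exp\mathfrak{a}=\{m_{e^\phi}\mid\phi\in\mathcal{G}\}$ and $m_{e^\phi}1=e^\phi$, and the map $a\mapsto a1$ was just shown to be a bijection $\exp\mathfrak{a}\to\exp\mathcal{G}$. Consequently, for $g\in\exp\g$ and $\phi\in\mathcal{G}$ one has $g(e^\phi)=g(m_{e^\phi}1)=(g\,m_{e^\phi})1$, and since $\mathfrak{a}\subseteq\g$ forces $\exp\mathfrak{a}\subseteq\exp\g$ and $\exp\g$ is a group, $g\,m_{e^\phi}\in\exp\g$. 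Thus it is enough to prove $h1\in\exp\mathcal{G}$ for every $h\in\exp\g$; the ``in particular'' assertion is the case $\phi=0$, and the general invariance statement then follows from this reduction.

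For the remaining claim I would invoke Proposition \ref{P:factor} with the decomposition $\g=\mathfrak{a}\oplus\mathfrak{b}$, $\g_i=\mathfrak{a}_i\oplus\mathfrak{b}_i$, to write $h=ab$ with $a\in\exp\mathfrak{a}$ and $b\in\exp\mathfrak{b}$. Writing $b=\exp\beta$ with $\beta\in\mathfrak{b}$, the defining property of $\mathfrak{b}$ gives $\beta1=0$, hence $\beta^n1=0$ for all $n\geq1$; since the exponential series converges in $\A$ (as $\beta\in\A_1$), we get $b1=1$. Therefore $h1=a(b1)=a1\in\exp\mathcal{G}$, which together with the reduction above proves the lemma.

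I do not expect a substantive obstacle once Proposition \ref{P:factor} is in hand; the points that need care are organizational: that $\exp\mathfrak{a}$ sits inside the group $\exp\g$ so the products $g\,m_{e^\phi}$ and $ab$ stay in $\exp\g$, that the exponential of a constant-annihilating operator fixes $1$, and -- most importantly -- that the factorization is taken in the order $h=ab$ with the constant-annihilating factor $b$ on the \emph{right}, so that evaluating $h$ on $1$ first eliminates $b$ and leaves only the multiplication operator $a$, whose value on $1$ lies in $\exp\mathcal{G}$ by the isomorphism recalled just before the lemma.
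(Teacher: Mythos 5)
Your proof is correct and follows essentially the same approach as the paper: both reduce to evaluating a group element of $\exp\g$ on $1$, invoke Proposition \ref{P:factor} with the decomposition $\g=\mathfrak{a}\oplus\mathfrak{b}$ to factor it as $ab$, and observe that $b1=1$ while $a1\in\exp\mathcal{G}$. The only difference is presentational — you first reduce the invariance claim to the statement about $h1$, whereas the paper applies the factorization directly to $gm_{e^f}$ — but the underlying argument is identical.
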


\begin{proof}
Assume that $g \in \exp \g$ and $f \in \mathcal{G}$. Then $m_{e^f} \in\exp \mathfrak{a}$ and $gm_{e^f}\in\exp\g$. By Proposition \ref{P:factor}, the element $gm_{e^f}$ is uniquely represented as a product $gm_{e^f}= ab$, where $a \in \exp \mathfrak{a}$ and $b \in \exp \mathfrak{b}$. Then $a1 \in \exp\mathcal{G}$ and $b1=1$. Therefore, applying the operator $g$ to the function $e^f$, we get
\[
g (e^f) = (gm_{e^f})1 = (ab)1  =a1 \in  \exp\mathcal{G}.
\]
Thus, $g(\exp\mathcal{G}) \subset \exp\mathcal{G}$ and therefore $g1 \in \exp\mathcal{G}$.
\end{proof}

Let $\star$ be a natural star product on $M$. We extend it to $\F$ so that $L_{t_i}=R_{t_i}=t_i$ be the ``pointwise" multiplication operator by $t^i$ for every~$i$. The space $\mathcal{G}\subset \F_1$
 is a Lie algebra with respect to the star-commutator $[f,g]_\star = f \star g - g \star f$. This Lie algebra is pronilpotent with respect to the $t$-filtration $\{\mathcal{G}_i\}$, where $i \geq 1$. Given $f \in \mathcal{G}$, the exponential series
\[
    \exp_{\star} f= 1 + f + \frac{1}{2} f \star f + \ldots
\]
defines an element of $\F_0$. We set
\[
    \exp_{\star} \mathcal{G} := \{ \exp_{\star} f | f \in \mathcal{G}\} \subset \F_0.
\]
This is the Lie group of the Lie algebra $(\mathcal{G}, [\cdot, \cdot]_{\star})$.
\begin{lemma}\label{L:etof}
The subsets $\exp_{\star} \mathcal{G}$ and $\exp \mathcal{G}$ of $\F_0$ coincide.
\end{lemma}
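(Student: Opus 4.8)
The plan is to route both inclusions through the operator $L_h$ of left $\star$-multiplication, and the crucial observation I would establish first is that $L_h\in\g$ for every $h\in\mathcal{G}$. Writing $h=\nu^{-1}h_{-1}+h_0+\nu h_1+\ldots$ with each $h_j$ of $t$-filtration degree $\geq1$, the operator $\nu L_h=\nu m_h+\sum_{r\geq1}\nu^{r+1}C_r(h,\cdot)$ has no components in negative powers of $\nu$, its $\nu^0$-component is $m_{h_{-1}}$, and for $k\geq1$ its $\nu^k$-component is $m_{h_{k-1}}+\sum_{r=1}^{k}C_r(h_{k-1-r},\cdot)$. Because $\star$ is natural, $C_r$ has order $\leq r$ in its second argument, so each such component has order $\leq k$; hence $\nu L_h$ is a natural operator of positive $t$-degree and $L_h\in\g$. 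I would also record the identities $(\exp L_h)1=\exp_\star h$ (from $L_h^n1=h^{\star n}$) and $[L_h,L_{h'}]=L_{[h,h']_\star}$, which, together with the fact that $\mathcal{G}$ is closed under $[\cdot,\cdot]_\star$, show that $\mathfrak{l}:=\{L_h\mid h\in\mathcal{G}\}$ is a Lie subalgebra of $\g$ with $\exp\mathfrak{l}=\{L_{\exp_\star h}\mid h\in\mathcal{G}\}$.

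The inclusion $\exp_\star\mathcal{G}\subseteq\exp\mathcal{G}$ is then immediate: for $h\in\mathcal{G}$ the element $\exp L_h$ lies in $\exp\g$, so Lemma \ref{L:expfone} yields $\exp_\star h=(\exp L_h)1\in\exp\mathcal{G}$.

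For the reverse inclusion I would apply Proposition \ref{P:factor} to the pair of subalgebras $\mathfrak{l}$ and $\mathfrak{b}$. First I would check that $\g=\mathfrak{l}\oplus\mathfrak{b}$ compatibly with the $t$-filtration: for $A\in\g$ write $A=L_{A1}+(A-L_{A1})$; since $\nu A$ is natural, $A1$ has $\nu$-valuation $\geq-1$ and positive $t$-degree, hence $A1\in\mathcal{G}$ and $L_{A1}\in\mathfrak{l}$, while $A-L_{A1}$ lies in $\g$ and annihilates constants, hence lies in $\mathfrak{b}$; the sum is direct because $L_h\in\mathfrak{b}$ forces $h=L_h1=0$, and if $A\in\g_i$ then $A1\in\mathcal{G}_i$, so both summands lie in $\g_i$. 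Then, given $f\in\mathcal{G}$, the multiplication operator $m_{e^f}$ lies in $\exp\mathfrak{a}\subseteq\exp\g$, and factoring $m_{e^f}=ab$ with $a\in\exp\mathfrak{l}$ and $b\in\exp\mathfrak{b}$ gives, using $b1=1$ and $a=L_{\exp_\star h}$ for some $h\in\mathcal{G}$, that $e^f=m_{e^f}1=a(b1)=a1=\exp_\star h\in\exp_\star\mathcal{G}$. Hence $\exp\mathcal{G}\subseteq\exp_\star\mathcal{G}$, which completes the argument.

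I expect the main obstacle to be the very first step — verifying that $\nu L_h$ is a natural operator for $h\in\mathcal{G}$ — since this is precisely where the hypothesis that $\star$ is natural enters the argument; once it is in place, everything else is a formal consequence of Proposition \ref{P:factor} and Lemma \ref{L:expfone}, the only care needed being that the splitting $\g=\mathfrak{l}\oplus\mathfrak{b}$ respects the filtration so that Proposition \ref{P:factor} is applicable.
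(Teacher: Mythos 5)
Your proof of the forward inclusion $\exp_\star\mathcal{G}\subseteq\exp\mathcal{G}$ is essentially the paper's: establish $L_f\in\g$ for $f\in\mathcal{G}$ and invoke Lemma~\ref{L:expfone}. Your proof of the reverse inclusion, however, takes a genuinely different route. The paper proves $\exp\mathcal{G}\subseteq\exp_\star\mathcal{G}$ by an explicit iteration: starting from $f_0=f$, it constructs $f_{k+1}$ via $e^{f_{k+1}}=\exp_\star(-f_k)\star e^{f_k}$, shows by a $\pmod{\F_{2i}}$ computation that $f_k\in\mathcal{G}_{2^k}$, and then exhibits $e^f$ as a convergent infinite star-product of star-exponentials. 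You instead introduce the Lie subalgebra $\mathfrak{l}=\{L_h\mid h\in\mathcal{G}\}\subset\g$ (using $[L_h,L_{h'}]=L_{[h,h']_\star}$ and $L_h^n1=h^{\star n}$), verify the filtered splitting $\g=\mathfrak{l}\oplus\mathfrak{b}$ via $A\mapsto L_{A1}+(A-L_{A1})$, and apply Proposition~\ref{P:factor} once more to $m_{e^f}$ to read off $e^f=a1=\exp_\star h$. Both arguments are correct and both ultimately rest on Proposition~\ref{P:factor}; the paper wraps it inside Lemma~\ref{L:expfone} and then redoes the recursive peeling by hand, whereas you invoke the factorization proposition directly with a second complementary subalgebra. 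Your version is arguably more uniform in that the two directions become symmetric applications of the same factorization (with $\mathfrak{a}\oplus\mathfrak{b}$ for one direction, $\mathfrak{l}\oplus\mathfrak{b}$ for the other), at the modest cost of having to check the filtration-compatibility $\g_i=\mathfrak{l}_i\oplus\mathfrak{b}_i$, which you do handle.
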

\begin{proof}
Given $f \in \mathcal{G}$, the operator $\nu L_f = L_{\nu f}$ is natural and therefore $L_f \in \g$. Thus, $\exp L_f \in \exp\g$. By Lemma \ref{L:expfone}, the operator $\exp L_f $ with $f \in \mathcal{G}$ leaves invariant the set $\exp\mathcal{G}$. Given $f, g \in \mathcal{G}$, we have
\begin{equation*}
   (\exp_{\star} f) \star e^g = \left(L_{\exp_{\star} f}\right) e^g = \left(\exp {L_f} \right)e^g\in \exp\mathcal{G}.
\end{equation*}
Taking $g=0$, we get that $\exp_{\star} f \in \exp\mathcal{G}$. Hence, $\exp_{\star} \mathcal{G} \subset \exp\mathcal{G}$. 

 Given $u \in \mathcal{G}_i$, there exists $v \in \mathcal{G}$ such that $e^v = \exp_{\star} (-u) \star e^u$. Since
\[
     e^u = 1 + u \pmod {\F_{2i}} \mbox{ and } \exp_{\star} (-u) = 1 - u \pmod{\F_{2i}},
\]
we see that $e^v \in 1 + \F_{2i}$ and therefore $v \in \mathcal{G}_{2i}$. 

Let $f \in \mathcal{G}=\mathcal{G}_1$. We will show that $e^f \in \exp_{\star} \mathcal{G}$.
We construct a sequence $\{f_k\}, k \geq 0$, in $\mathcal{G}$ such that $f_0=f \in \mathcal{G}_1$ and
\[
    e^{f_{k+1}} = \exp_{\star} (-f_k) \star e^{f_k}
\]
for $k \geq 0$. We have $f_k \in \mathcal{G}_{2^k}$ for all $k \geq 0$. Observe that
\[
   e^f = e^{f_1} = (\exp_\star f_1) \star e^{f_2} = (\exp_\star f_1) \star (\exp_\star f_2) \star e^{f_3} = \ldots
\]
Since $e^{f_k} \to 1$ as $k \to \infty$ in the topology induced by the $t$-filtration, we get that
\[
   e^f =  (\exp_\star f_1) \star (\exp_\star f_2)  \star \ldots \in \exp_\star \mathcal{G}.
\]
It follows that $\exp_\star \mathcal{G} = \exp \mathcal{G}$.
\end{proof}

We give some basic facts on full symbols of formal differential operators. Let $U$ be a coordinate chart with coordinates $\{x^i\}, i=1,\ldots,n$, and let $\{\xi_i\}$ be the dual fiber coordinates on $T^\ast U$ which are treated as formal parameters. A formal differential operator $A \in \mathbb{D}(U)((\nu))$ can be written in the normal form as
\[
     A = \sum_{j=k}^\infty \nu^j \sum_{r=0}^{N_j}   A_j^{i_1 \ldots i_r}(x) \p_{i_1} \ldots \p_{i_r},
\]
where $k \in \Z, A_j^{i_1 \ldots i_r}(x) \in C^\infty(U)$ is symmetric in $i_1, \ldots, i_r$ for all~$j$ and~$r$, and $\p_i =\p/\p x^i$. The full symbol of the operator $A$ is the formal series
\[
     S(A) = \sum_{j=k}^\infty \sum_{r=0}^{N_j}  \nu^{j-r}  A_j^{i_1 \ldots i_r}(x) \xi_{i_1} \ldots \xi_{i_r},
\]
which is an element of $\left(C^\infty(U)((\nu))\right) [[\xi_1, \ldots,\xi_n]]$, because for a fixed $r$ the power of $\nu$ is bounded below by $k-r$.
The operator $A$ is natural if and only if $N_j\leq j$ for all $j$ or, equivalently, 
$S(A)$ does not contain negative powers of $\nu$. It is well-known that
\begin{equation}\label{E:symbol}
    S(A) = e^{-\frac{1}{\nu} x^i\xi_i} A \left(e^{\frac{1}{\nu} x^i\xi_i}\right) = \left(e^{-\frac{1}{\nu} x^i\xi_i} A e^{\frac{1}{\nu} x^i\xi_i}\right)1.
\end{equation}
The $\C((\nu))$-linear mapping $A \mapsto S(A)$ restricted to the formal differential operators with constant coefficients is an algebra homomorphism: if $A$ and $B$ have constant coefficients, then  $S(AB)=S(A)S(B)$.

For every $x\in M$ there exists a formal distribution $\Lambda_x$ on $M^2$ supported at $(x,x)$ such that
\begin{equation*}\label{E:lamx}
   \Lambda_x(f \otimes g) = (f \star g)(x)
\end{equation*}
for all $f,g \in C^\infty(M)$.

\begin{theorem}\label{T:grosc}
A star product $\star$ on a manifold $M$ is natural if and only if the formal distribution $\Lambda_x$ is oscillatory for all $x \in M$.
\end{theorem}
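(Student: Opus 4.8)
The plan is to fix $\x\in M$, choose a chart $U\ni\x$ with coordinates vanishing at $\x$ (so $U\times U\subset M^2$ carries coordinates $x^i$ on the first and $y^i$ on the second factor, all vanishing at $(\x,\x)$), adjoin $2n$ formal fibre parameters $\xi_1,\dots,\xi_n,\eta_1,\dots,\eta_n$, and encode $\Lambda_{\x}$ by the generating function
\[
   \Phi_{\x}(\xi,\eta):=\bigl(e^{\frac1\nu x^i\xi_i}\star e^{\frac1\nu x^i\eta_i}\bigr)(\x)
   =1+\sum_{r\ge1}\sum_{I,J}\nu^{\,r-|I|-|J|}\,c_r^{IJ}(\x)\,\xi^I\eta^J ,
\]
where $C_r(f,g)=\sum_{I,J}c_r^{IJ}\,\partial^I f\,\partial^J g$ and $\partial^I$ brings $(\xi/\nu)^I$ down from the exponential. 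This lies in $\C((\nu))[[\xi,\eta]]$ because the coefficient of $\xi^I\eta^J$ involves only powers $\nu^m$ with $m\ge 1-|I|-|J|$, and by the unit axiom $c_r^{IJ}=0$ when $I=0$ or $J=0$, so $\Phi_{\x}-1\in(\xi,\eta)^2$. Writing $\Lambda_{\x}=\delta_{(\x,\x)}\circ A$ for the unique constant-coefficient operator $A=1+\sum_{r\ge1}\nu^r\sum_{I,J}c_r^{IJ}(\x)\,\partial_x^I\partial_y^J$ on $U\times U$, the symbol formula $(\ref{E:symbol})$, applied on $U\times U$ and evaluated at $(\x,\x)$, gives $S(A)=\Lambda_{\x}\bigl(e^{\frac1\nu x^i\xi_i}\otimes e^{\frac1\nu y^i\eta_i}\bigr)=\Phi_{\x}$. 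By Proposition~\ref{P:osciff}, $\Lambda_{\x}$ is oscillatory iff $C:=\log A\in\g$; since $\Lambda_{\x,0}=\delta_{(\x,\x)}$ forces $C$ to start at $\nu^1$ while any $\nu^{-1}X\in\g$ has $X$ starting at $\nu^2$, this is equivalent to $\nu C$ being a natural operator. As $S$ restricts to an algebra homomorphism on constant-coefficient operators, $S(C)=\log S(A)=\log\Phi_{\x}$ and $S(\nu C)=\nu\log\Phi_{\x}$, so by the criterion that an operator is natural exactly when its full symbol has no negative powers of $\nu$, I obtain the key reformulation: \emph{$\Lambda_{\x}$ is oscillatory if and only if $\nu\log\Phi_{\x}$ has no negative powers of $\nu$.}

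For ``$\star$ natural $\Rightarrow$ $\Lambda_{\x}$ oscillatory'', apply the constructions of Section~\ref{S:nat} over $U$ with the $2n$ parameters above. The functions $f_\xi:=\nu^{-1}x^i\xi_i$ and $f_\eta:=\nu^{-1}x^i\eta_i$ lie in $\mathcal G$, so $e^{f_\xi},e^{f_\eta}\in\exp\mathcal G$, which by Lemma~\ref{L:etof} equals the group $\exp_\star\mathcal G$. Hence $e^{f_\xi}\star e^{f_\eta}=e^{w}$ for some $w\in\mathcal G$, and evaluating at $\x$ (where both exponentials equal $1$) gives $\Phi_{\x}=e^{\,w|_{\x}}$ and $\log\Phi_{\x}=w|_{\x}$. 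An element of $\mathcal G$ has $\nu$-valuation $\ge-1$, hence so does $w|_{\x}$; thus $\nu\log\Phi_{\x}$ has no negative powers of $\nu$, and by the reformulation $\Lambda_{\x}$ is oscillatory. As $\x$ is arbitrary, this direction is done.

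For the converse, assume $\Lambda_{\x}$ is oscillatory for every $\x$, so every $\xi^I\eta^J$-coefficient of $\log\Phi_{\x}$ has $\nu$-valuation $\ge-1$; I claim $c_r^{IJ}(\x)=0$ whenever $|I|>r$ or $|J|>r$, which is exactly naturality of $\star$. The claim would be proved by induction on $|I|+|J|$. Fix such $r,I,J$; by the unit axiom we may assume $I,J\neq0$, so $r-|I|-|J|\le-2$. Expand $\log\Phi_{\x}=\sum_{k\ge1}\tfrac{(-1)^{k-1}}{k}(\Phi_{\x}-1)^k$ and examine the $\xi^I\eta^J$-coefficient. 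The $k=1$ term contributes $\sum_{r'}\nu^{\,r'-|I|-|J|}c_{r'}^{IJ}(\x)$, whose $\nu^{\,r-|I|-|J|}$-part is $c_r^{IJ}(\x)$. For $k\ge2$, a contribution is a product of coefficients $\nu^{\,r_\ell-|I_\ell|-|J_\ell|}c_{r_\ell}^{I_\ell J_\ell}(\x)$ with each $I_\ell,J_\ell\neq0$ (since $\Phi_{\x}-1$ has no pure-$\xi$ or pure-$\eta$ monomials) and $\sum_\ell(|I_\ell|+|J_\ell|)=|I|+|J|$; since $k\ge2$, each $|I_\ell|+|J_\ell|<|I|+|J|$, so the inductive hypothesis gives $|I_\ell|\le r_\ell$ and $|J_\ell|\le r_\ell$, whence the $\ell$-th factor has $\nu$-valuation $\ge-\min(|I_\ell|,|J_\ell|)$ and the product has $\nu$-valuation $\ge-\sum_\ell\min(|I_\ell|,|J_\ell|)\ge-\min(|I|,|J|)$. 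Because $|I|+|J|-\min(|I|,|J|)=\max(|I|,|J|)$, the inequality $-\min(|I|,|J|)>r-|I|-|J|$ holds precisely when $\max(|I|,|J|)>r$, i.e. under our hypothesis; hence the $\nu^{\,r-|I|-|J|}$-part of the $\xi^I\eta^J$-coefficient of $\log\Phi_{\x}$ receives no contribution from $k\ge2$ and equals $c_r^{IJ}(\x)$. As $r-|I|-|J|\le-2<-1$, that part vanishes, so $c_r^{IJ}(\x)=0$, completing the induction.

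I expect the converse to be the main obstacle: the higher powers $(\Phi_{\x}-1)^k$ a priori carry arbitrarily negative powers of $\nu$, and the whole force of the argument is that the hypothesis ``$\nu\log\Phi_{\x}$ has no negative powers of $\nu$'' produces exactly enough cancellation to bound the orders of the $C_r$, with the $\min$/$\max$ bookkeeping over the two groups of variables the delicate point. Setting up the operator/symbol dictionary of the first paragraph precisely is the other place requiring care; the forward direction is essentially formal once Lemmas~\ref{L:expfone} and~\ref{L:etof} are in hand.
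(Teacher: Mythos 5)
Your proof is correct, and the two directions have opposite status when compared with the paper's argument. The direction ``$\star$ natural $\Rightarrow\Lambda_{\x}$ oscillatory'' proceeds as in the paper: both invoke Lemma~\ref{L:etof} to see that $e^{\nu^{-1}x^i\xi_i}\star e^{\nu^{-1}x^i\eta_i}$ lies in $\exp\mathcal G$, and then read off the $\nu$-valuation bound on the symbol (you simply evaluate at $\x=0$, which kills the extra factor $e^{\nu^{-1}x^i(\xi_i+\eta_i)}$ carried along in the paper's display; the content is the same). Your converse, however, is a genuinely different argument. The paper, from the hypothesis that $A(x)$ is natural with $F_{r,0,l}=F_{r,k,0}=0$, conjugates $\nu^{-1}A(x)$ by $e^{\nu^{-1}z^i\xi_i}$ and checks directly that the resulting operator, hence the full symbol $S(L_f)$, has no negative $\nu$-powers, concluding naturality of $L_f$ and $R_f$. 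You instead reformulate oscillatoriness as the statement that $\nu\log\Phi_{\x}$ is $\nu$-regular, and then run a self-contained valuation/Newton-polygon induction on $|I|+|J|$ over the $\xi^I\eta^J$-coefficients of $\log\Phi_{\x}$. The crucial input in both proofs is the unit axiom, which kills the pure-$\xi$ and pure-$\eta$ monomials; in the paper this forces $l\le r-1$ in the conjugation estimate, while in your argument it both makes $\Phi_{\x}-1\in(\xi,\eta)^2$ (so the induction has something to recurse on) and ensures $r-|I|-|J|\le-2$ when $\max(|I|,|J|)>r$. Your approach trades the paper's operator-conjugation computation for elementary multi-index bookkeeping ($\sum_\ell\min(|I_\ell|,|J_\ell|)\le\min(|I|,|J|)$) and makes transparent exactly where each hypothesis is used; the paper's route avoids the log-series combinatorics and gives naturality of $L_f$ and $R_f$ directly rather than via the coefficients $c_r^{IJ}$. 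Both are complete proofs. The one preparatory step in your reformulation worth spelling out in a final write-up is that $C:=\log A$ exists in $\mathbb{D}(U\times U)[[\nu]]$ with positive $\nu$-valuation precisely because $\Lambda_{\x,0}=\delta_{(\x,\x)}$ forces $A-1\in\nu\,\mathbb{D}(U\times U)[[\nu]]$; you assert this, and it is true, but it is the hypothesis under which Proposition~\ref{P:osciff} is applicable.
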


{\it Example}. Let $(\pi^{ij})$ be an $n \times n$ matrix with constant coefficients. The star product
\[
  f \star g = \sum_{r=0}^\infty \frac{\nu^r}{r!} \pi^{i_1 j_1} \ldots \pi^{i_r j_r}\frac{\p^r f}{\p x^{i_1} \ldots \p x^{i_r}} \frac{\p^r g}{\p x^{j_1} \ldots \p x^{j_r}}
\]
on $\R^n$ is natural. If the matrix $(\pi^{ij})$ is skew-symmetric and nondegenerate, this is the Moyal-Weyl star product. Consider the natural operator
\[
   A := \nu^2 \pi^{ij} \frac{\p^2}{\p y^i \p z^j}
\]
on $\R^{2n}$. The formula
\[
   \Lambda_x(f \otimes g) = (f \star g)(x) = e^{\nu^{-1} A} (f(y)g(z)) \big |_{y=z=x},
\]
where $f,g \in C^\infty(\R^n)$, shows that the formal distribution $\Lambda_x$ is oscillatory for any $x$. It is nondegenerate if and only if the matrix $(\pi^{ij})$ is nondegenerate.

Now we proceed with a proof of Theorem \ref{T:grosc}.

\begin{proof}
Assume that a star product $\star$ on $M$ is such that the distribution $\Lambda_x$ is oscillatory for all $x \in M$. Let $U$ be a coordinate chart on $M$ with coordinates $\{x^i\}$. Then for each $x \in U$  there exists a unique natural operator with constant coefficients
\begin{equation}\label{E:aofx}
   A(x) = \sum_{r=2}^\infty \nu^r \sum_{k+l \leq r} F_{r,k,l}^{i_1 \ldots i_k j_1 \ldots j_l}(x)\frac{\p^k}{\p y^{i_1} \ldots \p y^{i_k}}\frac{\p^l}{\p z^{j_1} \ldots \p z^{j_l}}
\end{equation}
such that
\begin{equation}\label{E:expstar}
   (f \star g)(x) = e^{\nu^{-1} A(x)} (f(y)g(z))\big|_{y=z=x}.
\end{equation}
Since $(f \star 1)(x)=f(x)$, we get that
\[
    \exp \left(\sum_{r=2}^\infty \nu^{r-1} \sum_{k \leq r} F_{r,k,0}^{i_1 \ldots i_k}(x)\frac{\p^k}{\p y^{i_1} \ldots \p y^{i_k}}\right) f(y) \bigg |_{y=x} = f(x)
\]
for any $f(x)$. Hence, $F_{r,k,0}^{i_1 \ldots i_k}(x)=0$ for all $r$ and $k$. Similarly, $F_{r,0,l}^{j_1 \ldots j_l}(x)=0$ for all $r$ and $l$. 

Given $f\in C^\infty(U)$, we will prove that the operator $L_f$ is natural. To this end, we will calculate its full symbol $S(L_f)$ using (\ref{E:symbol}) and (\ref{E:expstar}). We will show that it does not contain negative powers of $\nu$. We have
\begin{eqnarray*}
S(L_f) = e^{-\nu^{-1} x^i \xi_i} L_f \left(e^{\nu^{-1} x^i \xi_i}\right) = e^{-\nu^{-1} x^i \xi_i}  \left(f \star e^{\nu^{-1} x^i \xi_i} \right)=\\
e^{-\nu^{-1} x^i \xi_i} e^{\nu^{-1} A(x)} \left(f(y) e^{\nu^{-1} z^i \xi_i}\right) \bigg |_{y=z=x} =\hskip 2.5cm \\
\left(e^{-\nu^{-1} z^i \xi_i} e^{\nu^{-1} A(x)} e^{\nu^{-1} z^i \xi_i}\right) f(y) \bigg |_{y=z=x} =\hskip 1.5cm\\
\exp \left(e^{-\nu^{-1} z^i \xi_i}\left(\nu^{-1} A(x)\right)e^{\nu^{-1} z^i \xi_i}\right) f(y) \bigg |_{y=z=x}.
\end{eqnarray*}
It suffices to prove that the operator $e^{-\nu^{-1} z^i \xi_i}\left(\nu^{-1} A(x)\right)e^{\nu^{-1} z^i \xi_i}$ does not contain negative powers of $\nu$. Using (\ref{E:aofx}), we will write this operator as follows,
\[
 \sum_{r=2}^\infty \nu^{r-1} \sum_{k+l \leq r} F_{r,k,l}^{i_1 \ldots i_k j_1 \ldots j_l}\frac{\p^k}{\p y^{i_1} \ldots \p y^{i_k}}\left(\frac{\p}{\p z^{j_1}}+ \frac{1}{\nu}\xi_{i_1}\right) \ldots \left(\frac{\p}{\p z^{j_l}}+ \frac{1}{\nu}\xi_{i_l}\right).
\]
Since $F_{r,0,l}^{j_1 \ldots j_l}=0$ for all $r$ and $l$, the condition $k+l \leq r$ in the second sum implies that $l \leq r-1$, which proves the claim. One can show similarly that the operator $R_f$ is natural for $f\in C^\infty(U)$. Since $U$ is arbitrary, the star product $\star$ is natural on $M$.

Now assume that $\star$ is a natural star product on $M$ and $U \subset M$ is an arbitrary coordinate chart. We will show that $\Lambda_x$ is oscillatory for every $x\in U$.
Let $\{\xi_i\}$ and $\{\eta_i\}$ be two sets of formal variables dual to $\{x^i\}$. We extend the star product $\star$ to $\F:=(C^\infty(U)((\nu)))[[\xi,\eta]]$ so that $L_{\xi_i}= R_{\xi_i} = \xi_i$ and $L_{\eta_i}= R_{\eta_i} = \eta_i$ for all~$i$. Denote by $\mathcal{G}$ the Lie algebra of functions from $\nu^{-1}C^\infty(U)[[\nu, \xi,\eta]]$  of positive filtration degree with respect to the variables $\xi$ and $\eta$ with the star commutator $[f,g]_\star = f \star g - g \star f$ as  the Lie bracket. This is a pronilpotent Lie algebra with the Lie group $\exp_{\star} \mathcal{G}$ whose elements are the star exponentials
\[
    \exp_{\star} f = 1 + f + \frac{1}{2} f \star f + \ldots 
\]
of the elements of $\mathcal{G}$. We can write the star product $\star$ as (\ref{E:expstar}) with
\[
    A(x) = \sum_{r=2}^\infty \nu^r \sum_{k+l \leq N_r} F_{r,k,l}^{i_1 \ldots i_k j_1 \ldots j_l}(x)\frac{\p^k}{\p y^{i_1} \ldots \p y^{i_k}}\frac{\p^l}{\p z^{j_1} \ldots \p z^{j_l}},
\]
where $N_r$ is some integer for each $r \geq 2$. We have to show that $A(x)$ is natural for every $x\in U$, i.e., that $N_r \leq r$ for all $r\geq 2$. To this end, we consider two functions in $\exp \mathcal{G}=\{ e^f | f \in \mathcal{G}\}$,
\[
   f(x):= e^{\nu^{-1} x^i \xi_i} \mbox{ and }  g(x):=e^{\nu^{-1} x^i \eta_i} .
\]
By Lemma \ref{L:etof}, $f,g \in \exp_{\star} \mathcal{G}$. Therefore, $f \star g \in \exp_{\star} \mathcal{G}=\exp \mathcal{G}$.
Using (\ref{E:symbol}) and  (\ref{E:expstar}), we get that for $x\in U$,
\begin{eqnarray*}
  (f \star g)(x)= e^{\nu^{-1} A(x)} \left(e^{\nu^{-1}(y^i \xi_i + z^i\eta_i)}\right)\big|_{y=z=x}=\hskip 3cm\\
  e^{\nu^{-1}x^i (\xi_i+\eta_i) } \left(e^{-\nu^{-1}(y^i \xi_i + z^i\eta_i)}e^{\nu^{-1} A(x)} e^{\nu^{-1}(y^i \xi_i + z^i\eta_i)}\right)1\big|_{y=z=x}=\\
  e^{\nu^{-1}x^i (\xi_i+\eta_i) } S\left(e^{\nu^{-1} A(x)}\right) = e^{\nu^{-1} \left(x^i (\xi_i+\eta_i) + S(A(x))\right)} \in \exp\mathcal{G},
\end{eqnarray*}
where
\[
  S(A(x)) =\sum_{r=2}^\infty  \sum_{k+l \leq N_r} \nu^{r-k-l} F_{r,k,l}^{i_1 \ldots i_k j_1 \ldots j_l}(x)\xi_{i_1} \ldots \xi_{i_k} \eta_{j_1} \ldots \eta_{j_l}
\]
is the full symbol of $A(x)$. Since
\[
\nu^{-1} \left(x^i (\xi_i+\eta_i) + S(A(x))\right) \in \mathcal{G}, 
\]
$S(A(x))$ does not contain negative powers of $\nu$, which implies that $A(x)$ is natural and therefore $\Lambda_x$ is oscillatory for any $x \in U$. Since $U$ is arbitrary, $\Lambda_x$ is oscillatory for any $x \in M$.
\end{proof}

In \cite{CMP3} it was shown that the natural star products have a good semiclassical behavior. Theorem \ref{T:grosc} relates these star products to oscillatory distributions which can be thought of as quantum objects.

\section{Formal oscillatory integrals}\label{S:foi}

Let $M$ be a real $n$-dimensional manifold, $\x$ be a point in $M$,
\[
    \varphi = \nu^{-1} \varphi_{-1} + \varphi_0 + \nu \varphi_1 + \ldots
\]
be a formal complex-valued function and  $\rho = \rho_0 + \nu \rho_1 + \ldots$ be a formal complex-valued density on $M$ such that $\x$ is a nondegenerate critical point of $\varphi_{-1}$ with zero critical value, $\varphi_{-1}(\x)=0$, and $\rho_0(\x)\neq 0$. We call the pair $(\varphi,\rho)$ a phase-density pair with the critical point $\x$.
A formal oscillatory integral (FOI) at $\x$ associated with the phase-density pair $(\varphi,\rho)$ is a formal distribution
\[
   \Lambda=\Lambda_0 + \nu \Lambda_1 + \ldots
\]
on $M$ supported at $\x$ such that the value $\Lambda(f)$ for an amplitude $f$ heuristically corresponds to the formal integral expression
\begin{equation}\label{E:fint}
   \nu^{-\frac{n}{2}}  \int e^\varphi f \rho.
\end{equation}
The distribution $\Lambda$ is defined by certain algebraic axioms expressed in terms of the pair $(\varphi,\rho)$ which correspond to formal integral properties of (\ref{E:fint}). The full stationary phase expansion of an oscillatory integral (\ref{E:stph}) whose amplitude is supported near a nondegenerate critical point of the phase function is given by a FOI. The notion of a FOI was introduced in \cite{KS} and developed further in \cite{LMP10}.

\begin{definition}
Given a phase-density pair $(\varphi,\rho)$ with a critical point~$\x$ on a manifold $M$, a formal distribution $\Lambda=\Lambda_0 + \nu \Lambda_1 + \ldots$ on $M$ supported at $\x$ and such that $\Lambda_0$ is nonzero is called a formal oscillatory integral (FOI) associated with the pair $(\varphi,\rho)$ if
\begin{equation}\label{E:axiom}
    \Lambda(vf + (v \varphi + \mathrm{div}_\rho v)f)=0
\end{equation}
for any function $f$ and any vector field $v$ on $M$.
\end{definition}
In (\ref{E:axiom})  $\mathrm{div}_\rho v$ denotes the divergence of the vector field $v$ with respect to $\rho$ given by the formula
\[
     \mathrm{div}_\rho v = \frac{\mathbb{L}_v \rho}{\rho},
\]
where $\mathbb{L}_v$ is the Lie derivative with respect to $v$. Axiom (\ref{E:axiom}) corresponds to the formal integral property
\[
    \nu^{-\frac{n}{2}}  \int \mathbb{L}_v(e^\varphi f \rho)=0.
\]
Observe that the condition (\ref{E:axiom}) is coordinate-independent. As shown in~\cite{LMP10}, a FOI $\Lambda$ associated with $(\varphi,\rho)$ satisfies the following properties.
\begin{enumerate}
\item $\Lambda$ exists and is unique up to a multiplicative formal constant $c = c_0 + \nu c_1 + \ldots$ with $c_0 \neq 0$.
\item $\Lambda_0 = \alpha \delta_{\x}$ for some nonzero complex constant $\alpha$.
\item  $\Lambda$ is determined by the jets of infinite order of $\varphi$ and $\rho$ at~$\x$.
\item If $u=u_0 + \nu u_1 + \ldots$ is any formal function on $M$, then $\Lambda$ is associated with $(\varphi+u, e^{-u}\rho)$.
\item If $\Lambda$ is associated with two pairs $(\varphi,\rho)$ and $(\tilde\varphi,\rho)$ which share the density $\rho$, then the full jet of $\tilde\varphi - \varphi$ at $\x$ is a formal constant.
\end{enumerate}

\begin{definition}
A FOI associated with a pair $(\varphi,\rho)$ is strongly associated with it if
\begin{equation}\label{E:strong}
      \frac{d}{d\nu}\Lambda(f) - \Lambda \left(\frac{df}{d\nu} + \left(\frac{d\varphi}{d\nu} + \frac{d\rho/d\nu}{\rho}- \frac{n}{2\nu}\right)f \right)=0
 \end{equation}
 for any function $f$.
\end{definition}
The condition (\ref{E:strong}) is coordinate-independent. It corresponds to the formal property of (\ref{E:fint}) that integration commutes with
differentiation with respect to the formal parameter $\nu$. A FOI $\Lambda$ strongly associated with $(\varphi,\rho)$ satisfies the following properties.
\begin{enumerate}
\item $\Lambda$ exists and is unique up to a multiplicative nonzero complex constant.
\item  $\Lambda$ is determined by the jets of infinite order of $\varphi$ and $\rho$ at $\x$.
\item If $u=u_0 + \nu u_1 + \ldots$ is any formal function on $M$, then $\Lambda$ is strongly associated with $(\varphi+u, e^{-u}\rho)$.
\item If $\Lambda$ is strongly associated with two pairs $(\varphi,\rho)$ and $(\tilde\varphi,\rho)$ which share the density $\rho$, then the full jet of $\tilde\varphi - \varphi$ at $\x$ is a complex constant.
\end{enumerate}
It follows that for any phase-density pair $(\varphi,\rho)$ with a critical point $\x$ there exists a unique FOI $\Lambda$ strongly associated with it and such that $\Lambda_0 = \delta_{\x}$. It is coordinate-independent because it is determined by the coordinate-independent conditions (\ref{E:axiom}) and (\ref{E:strong}). After some preparations, we will give a formula for $\Lambda$ in local coordinates. 

\section{Operators on a space of formal jets}

Let $M$ be a real manifold of dimension $n$. Denote by $\J$ the space of jets of infinite order on $M$ supported at $\x \in M$, which is equipped with the decreasing filtration $\{\J_i\}$ by the order of zero at $\x$. The space~$\J$ is complete with respect to this filtration. Denote by $\D^{(k)}$ the space of differential operators on~$\J$ of order at most $k$. An element $A \in \D^{(k)}$ is a linear mapping $A:\J \to \J$ such that $\ad(f_0) \ldots \ad(f_k)A=0$ for any $f_i \in \J$, where $\ad(f)A = [f,A] = f \circ A - A \circ f$. Then
\[
    \D = \bigcup_{k=0}^\infty \D^{(k)}
\]
is the algebra of differential operators of finite order on~$\J$. The filtration on $\J$ induces a filtration $\{\D_i\}$, where $i \in \Z$, on $\D$. The filtration degree of an operator $A \in \D$ is the largest integer $k$ such that
\[
A\J_r \subset \J_{r+k}
\]
for all $r \geq 0$. The filtration degree of a differential operator of order $k$ is at least~$-k$, $\D^{(k)} \subset \D_{-k}$. Each space $\D^{(k)}$ is complete with respect to this filtration, but $\D$ is not. The completion $\hat\D$ of $\D$ contains differential operators of infinite order on~$\J$.  Denote the filtration degree of $f\in \J$ and of $A \in \D$ by $d(f)$ and $d(A)$, respectively.

Let $\Ncal$ be the algebra of natural operators on $\J[[\nu]]$,
\[
  \Ncal := \{A_0 + \nu A_1 + \ldots | A_r \in\D^{(r)} \mbox{ for all } r \geq 0\}.
\]
Clearly, $\nu^k \Ncal \subset \Ncal$ for all $k \geq 0$. We consider the algebra $\Ncal((\nu))$ whose elements are of the form $\nu^k A$, where $k \in \Z$ and $A \in \Ncal$,
\[
    \Ncal((\nu)) = \bigcup_{r=0}^\infty \nu^{-r}\Ncal.
\]
Notice that $\nu^{-1}\Ncal$ is a Lie algebra with respect to the commutator of operators and $\nu^{-1}\Ncal$ acts on $\Ncal$ by the adjoint action: given $A \in \nu^{-1}\Ncal$ and $B \in \Ncal$, we have $\ad(A)B= [A,B]\in \Ncal$.

We equip the algebra $\Ncal((\nu))$ with the following filtration. We set $d(\nu) =2$. The filtration degree of $A \in \nu^r\Ncal$ written as $A = \nu^r A_0 + \nu^{r+1} A_1 + \ldots$ with $A_k \in \D^{(k)}$ is
\[
d(A)=\inf \{ 2(r+k) + d(A_k) | k \geq 0\}.
\]
Since $d(A_k) \geq -k$, we get that $2(r+k) + d(A_k) \geq 2r+k$. Hence, $d(A) \geq 2r$. We call this filtration on $\Ncal((\nu))$ and a similar filtration on $\J((\nu))$ the {\it standard} filtration. The algebra $\Ncal$ is complete with respect to the standard filtration, $\{\Ncal_i\}$, but $\Ncal((\nu))$ and $\J((\nu))$ are not. Denote by~$\A$ the completion of the algebra $\Ncal((\nu))$ with respect to the standard filtration and by $\F$ the completion of $\J((\nu))$. The algebra $\A$ acts on~$\F$. The elements of $\A$ and~$\F$ can be written as certain series
\[
    \sum_{r \in \Z} \nu^r A_r \mbox{ and } \sum_{r \in \Z} \nu^r f_r,
\]
respectively, where $A_r \in \hat\D$ and $f_r \in \J$. Set
\[
    \g := \{A \in \nu^{-1}\Ncal | d(A) \geq 1\} \subset \A_1.
\]
It is a pronilpotent Lie algebra whose Lie group $\exp \g$ lies in $\A_0$.

Suppose that $(\varphi,\rho)$ is a phase-density pair on $M$ with a critical point $\x$ and $U$ is a coordinate neighborhood of $\x$ with coordinates $\{x^i\}$ such that $x^i(\x)=0$ for all $i$, that is, $\x=0$. We set
\[
         h_{ij} := \frac{\p \varphi_{-1}}{\p x^i \p x^j}\bigg|_{x=0}.
\]
Then $(h_{ij})$ is a symmetric nondegenerate complex matrix with constant entries. Let $(h^{ij})$ be its inverse matrix. We set
\begin{equation}\label{E:lapl}
    \psi := \frac{1}{2} h_{ij} x^i x^j  \mbox{ and }  \Delta := -\frac{1}{2} h^{ij} \frac{\p^2}{\p x^i \p x^j}.
\end{equation}
In \cite{LMP10}, Lemma 9.1, we proved that the formal distribution
\begin{equation}\label{E:tillam}
   \tilde\Lambda(f) := e^{\nu\Delta}f \big |_{x=0}
\end{equation}
is a FOI associated with the pair $(\nu^{-1}\psi, dx)$, where $dx=dx^1 \ldots dx^n$ is the Lebesgue density on $U$.
\begin{lemma}\label{L:tillam}
The FOI (\ref{E:tillam}) is strongly associated with the pair $(\nu^{-1}\psi, dx)$.
\end{lemma}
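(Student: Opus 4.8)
The plan is to verify the strong-association identity (\ref{E:strong}) directly for the specific pair $(\nu^{-1}\psi, dx)$ and the distribution $\tilde\Lambda(f) = e^{\nu\Delta}f\big|_{x=0}$. With $\varphi = \nu^{-1}\psi$ and $\rho = dx$, we have $d\rho/d\nu = 0$, so the quantity $\frac{d\varphi}{d\nu} + \frac{d\rho/d\nu}{\rho} - \frac{n}{2\nu}$ reduces to $-\nu^{-2}\psi - \frac{n}{2\nu}$. Hence the identity to be proved is
\[
   \frac{d}{d\nu}\left(e^{\nu\Delta}f\big|_{x=0}\right) - \left.e^{\nu\Delta}\left(\frac{df}{d\nu} - \nu^{-2}\psi f - \frac{n}{2\nu}f\right)\right|_{x=0} = 0 .
\]
Since $\frac{d}{d\nu}\big(e^{\nu\Delta}f\big) = e^{\nu\Delta}\big(\Delta f + \frac{df}{d\nu}\big)$ (using that $\Delta$ has constant coefficients and does not depend on $\nu$), the $df/d\nu$ terms cancel, and what remains to show is the operator identity, valid after evaluation at $x=0$,
\[
   e^{\nu\Delta}\,\Delta f\,\big|_{x=0} = e^{\nu\Delta}\left(-\nu^{-2}\psi f - \tfrac{n}{2\nu} f\right)\big|_{x=0}.
\]

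The key step is therefore a commutation computation between $e^{\nu\Delta}$ and the multiplication operator $m_\psi$ by $\psi = \tfrac12 h_{ij}x^ix^j$. First I would record the basic commutators: $[\Delta, m_{x^i}] = -h^{ij}\p_j$ (a first-order operator), and $[\Delta, m_\psi] = -\tfrac12 h^{ij}h_{kl}[\p_i\p_j, m_{x^kx^l}]$, which one computes to be $- x^i\p_i - \tfrac n2$ (the Euler operator plus a constant). Thus $[\Delta, m_\psi] = -E - \tfrac n2$ where $E = x^i\p_i$ is the Euler vector field, and moreover $[\Delta, E] = -\,$(something proportional to $\Delta$); in fact $[E,\Delta] = 2\Delta$ since $\Delta$ is homogeneous of degree $-2$. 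These let me compute $e^{-\nu\Delta} m_\psi e^{\nu\Delta}$ by the standard $\ad$-expansion $\sum \frac{(-\nu)^k}{k!}(\ad\Delta)^k(m_\psi)$: the series has only finitely many nonzero terms because $(\ad\Delta)(m_\psi) = -E - \tfrac n2$, $(\ad\Delta)^2(m_\psi) = -[\Delta, E] = -2\Delta$ up to sign bookkeeping, and $(\ad\Delta)^3(m_\psi) = 0$. So $e^{-\nu\Delta} m_\psi e^{\nu\Delta} = m_\psi + \nu(E + \tfrac n2) + \nu^2\Delta$ up to signs to be fixed carefully. Rearranging gives $m_\psi e^{\nu\Delta} = e^{\nu\Delta}\big(m_\psi - \nu E - \tfrac{n\nu}{2} + \nu^2\Delta\big)$ (again modulo sign verification).

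Applying this to $f$ and evaluating at $x=0$: the terms $m_\psi f$ and $E f = x^i\p_i f$ both vanish at $x=0$ (since $\psi$ vanishes to order $2$ and $E$ carries a factor $x^i$), leaving
\[
   e^{\nu\Delta}(m_\psi f)\big|_{x=0} = e^{\nu\Delta}\left(-\tfrac{n\nu}{2}f + \nu^2\Delta f\right)\big|_{x=0},
\]
which, after dividing by $\nu^2$ and moving terms around, is exactly the required identity $e^{\nu\Delta}\Delta f\big|_{x=0} = e^{\nu\Delta}(-\nu^{-2}\psi f - \tfrac n{2\nu}f)\big|_{x=0}$. I expect the main obstacle to be nothing conceptual but rather getting every sign and normalization right in the $\ad$-expansion — in particular tracking the factor $-\tfrac12$ in $\Delta$ and $\tfrac12$ in $\psi$ through the commutators $[\p_i\p_j, x^kx^l]$, and confirming that the series truncates at the $\nu^2$ term. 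An alternative, possibly cleaner, route would be to invoke property (4) of FOIs strongly associated to a pair together with the already-known fact (from \cite{LMP10}, Lemma 9.1) that $\tilde\Lambda$ is associated (not a priori strongly) to $(\nu^{-1}\psi, dx)$, and then check (\ref{E:strong}) only needs the single operator identity above; but the direct computation seems shortest.
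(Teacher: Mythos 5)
Your route — compute $e^{-\nu\Delta}\,m_\psi\,e^{\nu\Delta}$ by the $\ad$-expansion and push $m_\psi$ through the exponential — is genuinely different from the paper's proof, which never opens up $e^{\nu\Delta}$ but instead uses the already established fact that $\tilde\Lambda$ satisfies the axiom~(\ref{E:axiom}): the paper plugs in $v=x^i\p_i$ and $v=\p_i$ (with $f$ replaced by $-\tfrac12 h^{ij}\p_j f$) to get two identities which combine to $\tilde\Lambda\bigl(\Delta f + (\nu^{-2}\psi + \tfrac{n}{2\nu})f\bigr)=0$, from which (\ref{E:strong}) follows at once. Your explicit commutation approach is perfectly valid in spirit and arguably more self-contained, but as written it has a real gap.

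The gap is in the line ``the terms $m_\psi f$ and $Ef=x^i\p_i f$ both vanish at $x=0$.'' Those terms sit \emph{inside} the operator $e^{\nu\Delta}$, not outside it: after $e^{\nu\Delta}$ has acted, there is no reason for them to vanish at the origin. What actually vanishes at $x=0$ is the other side, $m_\psi\bigl(e^{\nu\Delta}f\bigr)\big|_{x=0}$, because $\psi(0)=0$. Carrying out the $\ad$-expansion with the correct signs (note $[\Delta,E]=+2\Delta$, so $[E,\Delta]=-2\Delta$, not $+2\Delta$ as you wrote) one gets
\[
  e^{-\nu\Delta}\,m_\psi\,e^{\nu\Delta} \;=\; m_\psi + \nu E + \tfrac{n\nu}{2} - \nu^2\Delta,
\]
and evaluating $m_\psi e^{\nu\Delta}f$ at $x=0$ therefore yields
\[
  0 \;=\; e^{\nu\Delta}\bigl(\psi f + \nu\,x^i\p_i f + \tfrac{n\nu}{2}f - \nu^2\Delta f\bigr)\big|_{x=0}.
\]
This is \emph{not} yet (\ref{E:calthree}): the Euler term $x^i\p_i f$ survives inside the exponential, and the sign pattern is opposite to what you claimed. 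To finish you need a second commutation identity, namely $e^{-\nu\Delta}x^i e^{\nu\Delta} = x^i + \nu h^{ij}\p_j$, which applied to $\p_i f$ gives $e^{\nu\Delta}(x^i\p_i f)\big|_{x=0} = 2\nu\, e^{\nu\Delta}\Delta f\big|_{x=0}$ (this is exactly the paper's (\ref{E:caltwo}) in operator form). Substituting this into the display above produces $e^{\nu\Delta}\bigl(\Delta f + \nu^{-2}\psi f + \tfrac{n}{2\nu}f\bigr)\big|_{x=0}=0$ and the verification of (\ref{E:strong}) then goes through as you describe. So the approach is salvageable with a second commutation; what you wrote does not close on its own.
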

\begin{proof}
It follows from formula (\ref{E:axiom}) with $v = x^i \p_i$ and $\rho=dx$ that
\begin{equation}\label{E:calone}
    \tilde\Lambda\left(x^i \p_i f + \left(2\nu^{-1}\psi + n \right)f\right)=0,
\end{equation}
where we have used that $v\psi=2\psi$ and $\mathbb{L}_v \rho=n\rho$. Replacing $f$ with  $- \frac{1}{2} h^{ij} \p_j f$ and setting $v = \p_i$ in (\ref{E:axiom}), we get
\begin{equation}\label{E:caltwo}
    \tilde\Lambda\left(\Delta f - \frac{1}{2}\nu^{-1} x^i\p_i f\right)=0,
\end{equation}
where the summation on $i$ is assumed. Dividing (\ref{E:calone}) by $2\nu$ and adding the result to (\ref{E:caltwo}), we get
\begin{equation}\label{E:calthree}
    \tilde\Lambda\left(\Delta f + \left(\nu^{-2}\psi + \frac{1}{2}\nu^{-1} n \right) f \right)=0.
\end{equation}
Now we verify (\ref{E:strong}) with $\varphi= \nu^{-1}\psi$ and $\rho=dx$ using (\ref{E:calthree}):
\begin{eqnarray*}
\frac{d}{d\nu}\tilde\Lambda (f)- \tilde\Lambda\left(\frac{df}{d\nu} - \left(\nu^{-2} \psi + \frac{1}{2}\nu^{-1} n\right)f\right) = \\
\tilde\Lambda\left( \Delta f + \frac{\p f}{\p \nu}\right) - \tilde\Lambda\left(\frac{df}{d\nu} - \left(\nu^{-2} \psi + \frac{1}{2}\nu^{-1} n\right)f\right) =0.
\end{eqnarray*}
\end{proof}
Assume that locally
\[
     \rho = e^u \, dx,
\]
where $u = u_0 + \nu u_1 + \ldots \in C^\infty(U)[[\nu]]$.  We call the function 
\[
    \chi(x) := \varphi (x) - \nu^{-1}\psi - \varphi_0(0) + u(x) - u_0(0)
\]
the {\it phase remainder}. Since we will need only the jet of infinite order of $\chi$ at $\x=0$, we identify $\chi=\nu^{-1}\chi_{-1}+\chi_0+ \ldots$ with its jet. The order of zero of $\chi_{-1}$ and of $\chi_0$ at $\x=0$ is at least 3 and 1, respectively. Hence, $\chi \in \F_1$ and therefore the operator $\exp \chi$ acts on $\F_0$. Since $d(\nu\Delta)=0$, the operator $\exp (\nu\Delta)$ acts on $\J((\nu))$ and respects the standard filtration. Thus, it also acts on $\F$ respecting the filtration. We define a formal distribution $\Lambda$ on $U$ supported at $\x=0$ by the formula
\begin{equation}\label{E:distrlam}
   \Lambda(f) := \left(e^{\nu\Delta} e^{\chi} f\right)\big |_{x=0}.
\end{equation}
If $f \in C^\infty(U)[[\nu]]$, then its jet at $\x=0$ lies in $\F_0$. Hence, $e^{\nu\Delta} e^{\chi} f \in \F_0$, which implies that $\Lambda(f)\in \C[[\nu]]$ and therefore $\Lambda = \Lambda_0 + \nu \Lambda_1 +\ldots$ (the coefficients at the negative powers of $\nu$ in $e^{\nu\Delta} e^{\chi} f$ vanish at $\x=0$ because its filtration degree is nonnegative).

\begin{proposition}\label{P:lamfoi}
The formal distribution (\ref{E:distrlam}) is the unique FOI $\Lambda = \Lambda_0 + \nu \Lambda_1 +\ldots$ strongly associated with the pair $(\varphi,\rho)$ and such that $\Lambda_0=\delta$.
\end{proposition}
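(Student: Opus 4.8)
The plan is to verify directly that the formal distribution $\Lambda$ of (\ref{E:distrlam}) satisfies the two coordinate-free axioms (\ref{E:axiom}) and (\ref{E:strong}) for the pair $(\varphi,\rho)$, together with $\Lambda_0=\delta$, and then to invoke the uniqueness statement of Section~\ref{S:foi}: for every phase-density pair with a critical point there is a unique FOI strongly associated with it and normalized so that $\Lambda_0=\delta$. The observation that makes this work is that $\Lambda(f)=\tilde\Lambda(e^\chi f)$, where $\tilde\Lambda(g)=e^{\nu\Delta}g|_{x=0}$ is the FOI (\ref{E:tillam}), which by Lemma~\ref{L:tillam} is strongly associated with $(\nu^{-1}\psi,dx)$; so the whole argument amounts to tracking how multiplication of the amplitude by $e^\chi$ modifies the phase-density pair, and one expects $e^\chi$ to shift the phase $\nu^{-1}\psi$ toward $\varphi$ while feeding the factor $e^u$ into the density. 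Since the two axioms are $\C((\nu))$-linear and continuous in the amplitude and hold for $\tilde\Lambda$ on smooth amplitudes, they hold for $\tilde\Lambda$ on any amplitude in $\F$, in particular on $e^\chi f$.

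First I would check (\ref{E:axiom}). A vector field $v$ is a derivation, so $v(e^\chi f)=(v\chi)e^\chi f+e^\chi(vf)$, whence $e^\chi\big(vf+(v\varphi+\mathrm{div}_\rho v)f\big)=v(e^\chi f)+\big(v(\varphi-\chi)+\mathrm{div}_\rho v\big)e^\chi f$. Writing $\rho=e^u\,dx$ one has $\mathrm{div}_\rho v=\mathrm{div}_{dx}v+v(u)$, while from the definition of the phase remainder $\varphi-\chi=\nu^{-1}\psi+\varphi_0(0)-u+u_0(0)$, so $v(\varphi-\chi)=v(\nu^{-1}\psi)-v(u)$ because $\varphi_0(0)$ and $u_0(0)$ are constants. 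The $v(u)$-terms cancel, and applying $\tilde\Lambda$ reduces the left side of (\ref{E:axiom}) for $\Lambda$ to $\tilde\Lambda\big(v(e^\chi f)+(v(\nu^{-1}\psi)+\mathrm{div}_{dx}v)(e^\chi f)\big)$, which vanishes by (\ref{E:axiom}) for $\tilde\Lambda$ and $(\nu^{-1}\psi,dx)$ applied to the amplitude $e^\chi f$.

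Next I would check (\ref{E:strong}) in the same spirit. Using $\frac{d}{d\nu}(e^\chi f)=e^\chi\big(\frac{d\chi}{d\nu}f+\frac{df}{d\nu}\big)$, axiom (\ref{E:strong}) for $\tilde\Lambda$ and $(\nu^{-1}\psi,dx)$ applied to $e^\chi f$ (the density term drops since $dx$ is $\nu$-independent) yields $\frac{d}{d\nu}\Lambda(f)=\Lambda\big(\frac{df}{d\nu}+(\frac{d\chi}{d\nu}+\frac{d}{d\nu}(\nu^{-1}\psi)-\frac{n}{2\nu})f\big)$; and a short computation with $\varphi=\nu^{-1}\psi+\chi+\varphi_0(0)-u+u_0(0)$ and $\frac{d\rho/d\nu}{\rho}=\frac{du}{d\nu}$ gives $\frac{d\varphi}{d\nu}+\frac{d\rho/d\nu}{\rho}=\frac{d}{d\nu}(\nu^{-1}\psi)+\frac{d\chi}{d\nu}$, which is exactly what is needed. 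Finally, for $\Lambda_0=\delta$ I would argue by filtration: $\chi\in\F_1$ forces $e^\chi-1\in\F_1$, the operator $e^{\nu\Delta}$ preserves the standard filtration because $d(\nu\Delta)=0$, and evaluation at $\x=0$ sends $\F_1$ into $\nu\,\C[[\nu]]$; hence $\Lambda(f)-\tilde\Lambda(f)=\tilde\Lambda\big((e^\chi-1)f\big)\in\nu\,\C[[\nu]]$, and since $\tilde\Lambda_0=\delta$ we conclude $\Lambda_0=\delta$. Combining the three verifications with the uniqueness statement completes the proof.

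I expect the two reductions to $\tilde\Lambda$ to be the substance of the argument, but they are not really obstacles once one notices that the correct device is to multiply the amplitude by $e^\chi$; in particular the additive constants $\varphi_0(0),u_0(0)$ are harmless because the axioms only involve $v\varphi$ and $d\varphi/d\nu$. The two points that will require a little care are the passage from smooth to formal amplitudes in the identities for $\tilde\Lambda$ (which rests on continuity and $\C((\nu))$-linearity in the amplitude, together with density of smooth amplitudes in $\F$) and, in the normalization step, making sure that the negative powers of $\nu$ genuinely present in $e^\chi f$ really disappear at the base point once $e^{\nu\Delta}$ has been applied.
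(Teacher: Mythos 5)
Your proposal is correct and follows essentially the same route as the paper: both verify the strong-association axiom (\ref{E:strong}) by the reduction $\Lambda(f)=\tilde\Lambda(e^\chi f)$ together with Lemma~\ref{L:tillam}, and both invoke the uniqueness statement for strongly associated FOIs with $\Lambda_0=\delta$. The only difference is that the paper cites \cite{LMP10}, Theorem~9.1, for axiom (\ref{E:axiom}) and the normalization $\Lambda_0=\delta$, whereas you verify these directly by the product rule for $v(e^\chi f)$ and the filtration argument $(e^\chi-1)f\in\F_1$; this makes your version self-contained but does not change the method.
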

\begin{proof}
It follows from \cite{LMP10}, Theorem 9.1, that $\Lambda$ is associated with the pair $(\varphi,\rho)$ and $\Lambda_0=\delta$. It remains to prove that it is strongly associated with $(\varphi,\rho)$ or, equivalently, with the pair $(\nu^{-1}\psi + \chi, dx)$. We will use Lemma \ref{L:tillam} and the fact that $\Lambda(f) = \tilde\Lambda(e^{\chi} f)$. We have
\begin{eqnarray*}
 \frac{d}{d\nu}\Lambda(f) =\frac{d}{d\nu}\tilde\Lambda(e^\chi f) = \tilde\Lambda\left(\frac{d}{d\nu}(e^\chi f)  + \left(-\frac{\psi}{\nu^2}  - \frac{n}{2\nu}\right)(e^\chi f)  \right)=\\
 \tilde\Lambda \left(e^\chi\left(\frac{df}{d\nu} + \left(-\frac{\psi}{\nu^2} + \frac{d\chi}{d\nu} - \frac{n}{2\nu}\right)f \right)\right) = \\
 \Lambda \left(\frac{df}{d\nu} + \left(\frac{d}{d\nu}(\nu^{-1}\psi + \chi)  - \frac{n}{2\nu}\right)f \right).
\end{eqnarray*}
\end{proof}

\section{Identification of formal oscillatory integrals}

Below we will prove the following theorem.
\begin{theorem}\label{T:foiosc}
  A formal distribution $\Lambda = \Lambda_0 + \nu \Lambda_1 +\ldots$ on a manifold $M$ supported at a point $\x\in M$ is a FOI strongly associated with some pair $(\varphi,\rho)$  with the critical point $\x$ and such that $\Lambda_0=\delta_{\x}$ if and only if $\Lambda$ is a nondegenerate oscillatory distribution.
\end{theorem}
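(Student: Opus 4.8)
The plan is to prove the two implications separately, using the local model from Proposition~\ref{P:lamfoi} for one direction and Proposition~\ref{P:osciff} for the other. Fix a coordinate chart $U$ around $\x$ with $x^i(\x)=0$.

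First I would establish the forward implication. Suppose $\Lambda$ is a FOI strongly associated with a pair $(\varphi,\rho)$ and $\Lambda_0 = \delta_{\x}$. By the uniqueness part of Proposition~\ref{P:lamfoi}, $\Lambda$ coincides with the distribution $\Lambda(f) = (e^{\nu\Delta}e^{\chi}f)|_{x=0}$, where $\Delta = -\tfrac12 h^{ij}\p_i\p_j$ with $(h_{ij})$ the Hessian of $\varphi_{-1}$ at $\x$ and $\chi$ the phase remainder. So it suffices to show that the operator $e^{\nu\Delta}e^{\chi}$, viewed as an operator on jets at $\x$, is oscillatory, i.e. equals $\exp(\nu^{-1}X)$ for a natural $X = \nu^2 X_2 + \nu^3 X_3 + \cdots$. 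Here I would apply Proposition~\ref{P:factor} in the algebra $\A$ (the completion of $\Ncal((\nu))$ with the standard filtration, $d(\nu)=2$) with $\g = \{A \in \nu^{-1}\Ncal \mid d(A)\ge 1\}$: the point is that $\nu\Delta \in \nu^{-1}\Ncal$ has standard degree $0$, not in $\g$, so I first need to absorb it. The cleaner route: observe that $\nu^{-1}\psi \cdot (\text{const}) $-type manipulations and the explicit normal form show $e^{\nu\Delta}e^{\chi}$ has the form $\exp(\nu^{-1}X)$ directly — take logarithm in $\hat\D[[\nu]]$ using the Baker–Campbell–Hausdorff series, and check degree by degree that each homogeneous piece of $X := \nu\log(e^{\nu\Delta}e^{\chi})$ lies in $\nu^2\Ncal$. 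Since $\chi \in \F_1$ means $\chi_{-1}$ vanishes to order $\ge 3$ and $\chi_0$ to order $\ge 1$, multiplication by $\nu^j\chi_{j}$ is an operator of order $0$ whose standard degree is $\ge 2(j+1)+ (\text{a positive amount})$; likewise $\nu\Delta$ has degree $0$ but order $2$, so $\nu\cdot(\nu\Delta) = \nu^2\Delta$ is manifestly in $\nu^2\Ncal$. The BCH brackets only improve the standard degree, and each bracket of a natural operator with $\nu^{-1}$ times a natural operator is again natural; a bookkeeping argument then gives $X \in \nu^2\Ncal$, i.e. $\Lambda = \delta_{\x}\circ\exp(\nu^{-1}X)$ is oscillatory. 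Nondegeneracy is immediate: $\Lambda_1 = \delta_{\x}\circ\Delta$, so $\beta_\Lambda$ is (a nonzero multiple of) $(h^{ij})$, which is nondegenerate since $\x$ is a nondegenerate critical point of $\varphi_{-1}$.

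For the converse, suppose $\Lambda$ is a nondegenerate oscillatory distribution. By Proposition~\ref{P:osciff}, writing $\Lambda = \delta_{\x}\circ\exp(C)$ with $C$ the unique constant-coefficient formal operator, we have $C \in \g$, i.e. $C = \nu^{-1}X$ with $X = \nu^2 X_2 + \cdots$ natural and constant-coefficient, and $X_2 = a^{ij}\p_i\p_j + b^i\p_i + c$. Nondegeneracy says $(a^{ij})$ is a nondegenerate symmetric matrix. I would then reverse-engineer a phase-density pair. Set $(h^{ij}) := 2(a^{ij})$, let $(h_{ij})$ be its inverse, $\psi := \tfrac12 h_{ij}x^ix^j$, $\Delta := -\tfrac12 h^{ij}\p_i\p_j$, so that $\Lambda_1 = \delta_{\x}\circ X_2$ matches $\delta_{\x}\circ\Delta$ up to the lower-order terms $b^i\p_i + c$, which can be gauged away by the freedom in item~(4) of the FOI axioms (replacing $\rho$ by $e^{-u}\rho$ shifts $u$, hence $\chi$). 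Now the operator $\exp(\nu^{-1}X)$ and the operator $e^{\nu\Delta}e^{\chi}$ are both oscillatory operators with constant coefficients; I factor $\exp(\nu^{-1}X) = e^{\nu\Delta}\cdot e^{\nu^{-1}Y}$ for a suitable natural $Y$ via Proposition~\ref{P:factor} (splitting $\g$ as "$\Delta$-part" $\oplus$ "remainder" — this requires checking the remainder subalgebra is filtration-compatible, which it is since $\nu^{-1}[\nu\Delta, \cdot]$ preserves naturalness). Then $e^{\nu^{-1}Y}$ should be of the form $e^{\chi}$ for a function $\chi$ only if $Y$ is a multiplication operator; in general $e^{\nu^{-1}Y}$ need not be multiplication by a function, so instead I would invoke the reconstruction algorithm: the jets of $\varphi$ and $\rho$ at $\x$ are determined by $\Lambda$, and conversely any constant-coefficient oscillatory operator of this shape arises from formula~\eqref{E:distrlam} for an appropriate $\chi \in \F_1$. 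Concretely, set $\chi := \log\!\big(e^{-\nu\Delta}\exp(\nu^{-1}X)\big)$ computed in $\hat\D[[\nu,\nu^{-1}]]$ and argue by the same degree bookkeeping as above that $\chi$ lies in $\F_1$ (vanishing orders $\ge 3$ in the $\nu^{-1}$-term, $\ge 1$ in the $\nu^0$-term); this uses that $X$ vanishes appropriately, which is where the constant-coefficient normalization and the nondegeneracy of $(a^{ij})$ enter — the quadratic-and-below part of $X$ is exactly accounted for by $\nu\Delta$ plus a gauge term. Then $\varphi := \nu^{-1}\psi + \chi$ (plus a constant adjustment to kill $\varphi_{-1}(\x)$) and $\rho := e^{u}\,dx$ recovered from $\chi$ give a phase-density pair, and Proposition~\ref{P:lamfoi} shows the resulting FOI strongly associated with $(\varphi,\rho)$ with leading term $\delta_{\x}$ equals $\Lambda$.

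The main obstacle is the converse direction, specifically the passage from an arbitrary constant-coefficient oscillatory operator $\exp(\nu^{-1}X)$ to the normal form $e^{\nu\Delta}e^{\chi}$ with $\chi$ an honest \emph{function} (multiplication operator) of the required vanishing order — a priori $e^{-\nu\Delta}\exp(\nu^{-1}X)$ is a differential operator of infinite order, not a multiplication operator. Resolving this is where I expect to need a separate lemma: that conjugating $\exp(\nu^{-1}X)$ by $e^{-\nu\Delta}$ and then using the fact that $X$ is constant-coefficient forces the result to be $e^\chi$ for a function $\chi$, perhaps most transparently by computing full symbols as in~\eqref{E:symbol} and the proof of Theorem~\ref{T:grosc} — the full symbol of a constant-coefficient operator is multiplicative, so $S(e^{-\nu\Delta}\exp(\nu^{-1}X))$ is forced to be the exponential of a function in the $\xi$-variables, and one reads off $\chi$ from it. That symbol computation, together with verifying the vanishing-order bounds, is the technical heart of the argument.
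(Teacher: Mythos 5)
Your high-level plan (combine Propositions~\ref{P:lamfoi}, \ref{P:osciff}, and \ref{P:factor}) is the right skeleton, but both directions have genuine gaps in the execution. In the forward direction, the quantity $X := \nu\log\bigl(e^{\nu\Delta}e^{\chi}\bigr)$ does \emph{not} lie in $\nu^2\Ncal$: already the summand $\nu\chi = \chi_{-1} + \nu\chi_0 + \cdots$ has a nonzero $\nu^0$-coefficient $m_{\chi_{-1}}$ (a nonzero multiplication operator that merely vanishes at $\x$ to order $\geq 3$), and ``vanishing order at $\x$'' is a statement about the standard filtration on $\J$, not about powers of $\nu$, so it cannot promote $\chi_{-1}$ into $\nu^2\Ncal$. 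The BCH brackets won't cancel this term. What the paper does instead is conjugate (via Lemma~\ref{L:conje}) to get $e^{\ad(\nu\Delta)}(e^{\chi})\in\exp\g$, factor it as $e^Be^C$ with $B\in\mathfrak{b}$, $C\in\mathfrak{c}$ by Proposition~\ref{P:factor}, and \emph{then} observe that $\delta\circ e^B=\delta$ kills the troublesome part. The constraint that the remaining operator $C$ has constant coefficients \emph{and} positive standard filtration degree is exactly what forces $X_0=X_1=0$ and $\mathrm{ord}\,X_2\leq 1$ (this is where the vanishing orders of $\chi_{-1},\chi_0$ enter, but only after the $\mathfrak{b}$-part has been discarded), so that $\nu\Delta + C$ is $\nu^{-1}$ times a natural operator starting at $\nu^2$. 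Your nondegeneracy claim ``$\Lambda_1 = \delta_{\x}\circ\Delta$'' is also slightly off — it is $\delta_{\x}\circ(\Delta+X_2)$ — but since $X_2$ has order $\leq 1$ the conclusion about $\beta_\Lambda$ survives.

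In the converse direction you correctly identify the obstacle, but the symbol-based fix you propose cannot work: the full symbol of the constant-coefficient operator $e^{-\nu\Delta}\exp(\nu^{-1}X)$ is a function of $\nu$ and $\xi$ only, with no $x$-dependence whatsoever, so there is no way to ``read off'' a phase remainder $\chi(x)$ from it. The missing idea is the conjugation $A\mapsto e^{\ad(\nu^{-1}\psi)}e^{-\ad(\nu\Delta)}(A)$, which sends $x^k\mapsto \nu h^{kl}\p_l$ and hence maps $\mathfrak{b}$ (operators whose coefficients vanish at $\x$) isomorphically onto $\mathfrak{e}$ (operators of the form $\p_i\circ A^i$, i.e.\ with $A^t 1 = 0$). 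One then applies Proposition~\ref{P:factor} to $\g = \mathfrak{e}\oplus\mathfrak{f}$, where $\mathfrak{f}$ is the Lie algebra of multiplication operators, obtaining $e^{\ad(\nu^{-1}\psi)}(e^C) = e^E e^{\chi}$ with $E\in\mathfrak{e}$ and an honest \emph{function} $\chi\in\mathfrak{f}$; conjugating back produces the required $B\in\mathfrak{b}$ with $e^{\ad(\nu\Delta)}(e^{\chi}) = e^B e^C$. It is this second change of variables — an operation not present in your outline and not recoverable from symbol considerations alone — that is the technical heart of the converse.
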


Let $(h_{ij})$ be a symmetric nondegenerate complex $n \times n$ matrix with constant entries and $(h^{ij})$ be its inverse matrix. We use the same notations $\psi$ and $\Delta$ as in (\ref{E:lapl}). Observe that $\nu\Delta$ and $\nu^{-1}\psi$ lie in $\nu^{-1}\Ncal$ and $d(\nu\Delta)=d(\nu^{-1}\Ncal)=0$.
\begin{lemma}\label{L:conje}
The adjoint action of the operators $\nu\Delta$ and $\nu^{-1}\psi$ by derivations of the algebra $\Ncal$ integrates to automorphisms of this algebra which respect the standard filtration and therefore extend to automorphisms of the algebras $\A$ and $\g$ and the Lie group $\exp{\g}$.
\end{lemma}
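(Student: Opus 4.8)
The plan is to verify that the two derivations $\ad(\nu\Delta)$ and $\ad(\nu^{-1}\psi)$ are \emph{locally nilpotent} in the filtration-graded sense, so that their exponentials are well-defined and automatically land back in $\Ncal$ and preserve the standard filtration. First I would recall the two basic facts already available: $\nu\Delta, \nu^{-1}\psi \in \nu^{-1}\Ncal$ with $d(\nu\Delta)=d(\nu^{-1}\psi)=0$, and, as noted before the lemma, $\ad(A)B \in \Ncal$ whenever $A \in \nu^{-1}\Ncal$ and $B \in \Ncal$. Since $\ad$ of a degree-$0$ element raises the standard filtration degree by at least $0$ and, crucially, $\nu\Delta$ is $\nu$ times a second-order operator while $\nu^{-1}\psi$ is $\nu^{-1}$ times a multiplication operator, I would track the \emph{order} of the differential operator produced by iterated brackets: $\ad(\nu\Delta)$ applied to an operator $A=A_0+\nu A_1+\cdots$ with $A_k \in \D^{(k)}$ lowers order in the precise bookkeeping that makes $2(r+k)+d(A_k)$ strictly increase, and similarly for $\ad(\nu^{-1}\psi)$. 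The key estimate to establish is that for fixed target filtration degree, only finitely many terms of $\exp(\ad(\nu\Delta))B$ (resp. $\exp(\ad(\nu^{-1}\psi))B$) contribute, so the exponential series converges in $\Ncal$ (which is complete) and defines an algebra automorphism.

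The cleanest way to package this is: (i) show $\ad(\nu\Delta)$ maps $\Ncal_i$ into $\Ncal_{i}$ but, more to the point, that for each fixed $j$, $(\ad(\nu\Delta))^m$ of a fixed $B$ eventually vanishes modulo $\Ncal_j$ — this follows because each bracket with $\nu\Delta$ either strips two derivatives (contributing a $+2$ via the $2(r+k)$ term offsetting the $\nu^{-1}$, net effect raising $d$) or, when it hits a coefficient, raises the order-of-zero at $\x$; in all cases the relevant monotone quantity increases, so after finitely many steps everything sits in $\Ncal_j$; (ii) conclude $\exp(\ad(\nu\Delta))$ is a well-defined linear endomorphism of $\Ncal$; (iii) it is an algebra homomorphism because $\ad(\nu\Delta)$ is a derivation and the Leibniz-rule manipulation $\exp(\ad D)(xy)=(\exp(\ad D)x)(\exp(\ad D)y)$ is valid once convergence is in hand; (iv) it is invertible with inverse $\exp(-\ad(\nu\Delta))$; (v) it preserves the standard filtration because $\ad(\nu\Delta)$ has filtration degree $\geq 0$ (indeed $d(\nu\Delta)=0$ and $d([A,B])\geq d(A)+d(B)$). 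The same five steps apply verbatim to $\nu^{-1}\psi$, using that $\ad(\nu^{-1}\psi)B$, for $B$ of order $k$, produces an operator whose order is $k-1$ and whose $\nu$-degree drops by one, so again the bookkeeping quantity $2(r+k)+d(A_k)$ does not decrease and the iterates are eventually deep in the filtration. Once both automorphisms of $\Ncal$ are constructed, they extend by continuity to the completion $\A$; they restrict to $\g \subset \A_1$ since they preserve filtration and the subspace $\nu^{-1}\Ncal$; and, being algebra automorphisms, they carry $\exp\g$ to itself.

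I expect the main obstacle to be step (i): making the "finitely many contributing terms" argument precise requires a careful, uniform estimate on how $d\bigl((\ad(\nu\Delta))^m B\bigr)$ grows with $m$ for fixed $B$. The subtlety is that $\ad(\nu\Delta)$ does \emph{not} strictly raise the standard filtration degree on a single application — a bracket that only differentiates a coefficient can leave $d$ unchanged — so one cannot argue by naive degree-counting alone. The fix is to introduce an auxiliary grading, e.g. bound the differential-operator order of the $\nu^j$-component of $(\ad(\nu\Delta))^m B$ and note that $\ad(\nu\Delta)$ increases the quantity $2(\text{$\nu$-exponent}) + 2(\text{op.\ order})$ — equivalently $d(A_k) \geq -k$ forces that after enough brackets either the order hits a floor that is incompatible with a low $\nu$-exponent, or the $\nu$-exponent itself has grown — so that modulo any fixed $\Ncal_j$ the series terminates. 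Once this combinatorial estimate is nailed down, everything else is the routine verification that a locally nilpotent derivation exponentiates to a filtration-preserving automorphism, and the transfer to $\A$, $\g$, and $\exp\g$ is immediate.
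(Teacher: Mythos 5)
Your outline (steps (i)--(v)) is the right skeleton, and you have correctly identified the crux: step (i), the convergence of the exponential series. You also correctly notice the subtlety that $\ad(\nu\Delta)$ does \emph{not} strictly raise the standard filtration degree, since $d(\nu\Delta)=0$, so a bracket that merely differentiates a coefficient (as in the term $(\partial_i\partial_j g)\,\partial^I$ of $[\Delta, g\,\partial^I]$) can leave $d$ unchanged. However, your proposed fix does not close the gap. The auxiliary quantity you introduce, ``$2(\nu\text{-exponent}) + 2(\text{op.\ order})$,'' is the wrong one: it does indeed increase, but an increase in it gives no lower bound on the filtration degree. The quantity that works is $2(\nu\text{-exponent}) - (\text{op.\ order})$, which is precisely the generic floor on $d(\nu^{r'}B)$ coming from $d(B) \geq -(\text{order of }B)$; since $\ad(\nu\Delta)$ raises the $\nu$-exponent by exactly $1$ and the order by at most $1$, this floor goes up by at least $1$ per application. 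Your mechanism description is also backwards: $\ad(\Delta)$ never ``strips derivatives'' and never ``raises the order-of-zero at $\x$''; rather, it differentiates coefficients (lowering their order of zero) and/or appends a derivative to the operator (raising the order), and the stated trade-off is what makes $2(\nu\text{-exp})-(\text{op.\ order})$ the right bookkeeping.

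For comparison, the paper's proof avoids the ``uniform estimate on how $d$ grows with $m$'' worry you anticipate by working one monomial at a time: for $A_r\in\D^{(r)}$, one has $(\ad\Delta)^s(A_r)\in\D^{(r+s)}$, hence $\nu^{r+s}(\ad\Delta)^s(A_r)\in\Ncal_{r+s}\subset\Ncal_r$, and the series $e^{\ad(\nu\Delta)}(\nu^r A_r)=\sum_s \frac{\nu^{r+s}}{s!}(\ad\Delta)^s(A_r)$ converges in $\Ncal_r$; the sum over $r$ then converges by completeness of $\Ncal$. For $\nu^{-1}\psi$ the situation is even simpler and you underuse it: since $\psi$ is a zeroth-order operator, $(\ad\psi)^s$ lowers the order of $A_r$ by $s$, so $(\ad\nu^{-1}\psi)^s(\nu^r A_r)$ vanishes for $s>r$ and the series is actually a finite sum, each of whose terms lies in $\Ncal_r$ because $d(\nu^{-1}\psi)=0$. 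Your phrase ``iterates eventually deep in the filtration'' for $\nu^{-1}\psi$ is not wrong for the total operator $A=\sum_r\nu^r A_r$, but it obscures the sharper and simpler termination fact. In short: your structure is right, the difficulty you flag is the real one, but the specific estimate you offer would not establish convergence; replacing it by the sign-corrected quantity $2(\nu\text{-exp})-(\text{op.\ order})$ (or by the paper's monomial-by-monomial argument) repairs the proof.
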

\noindent{\it Remark.} The operator $\exp{\nu\Delta}$ acts on the space $\F$, but the operator $\exp(\nu^{-1}\psi)$ is undefined on that space.

\begin{proof}
Given $A = A_0 + \nu A_1 +\ldots \in \Ncal$, we have $d(A_r) \geq -r$, hence $d(\nu^rA_r) \geq r$, and therefore $\nu^rA_r \in \Ncal_r$ for all $r \geq 0$. 
The action of $\exp (\ad(\nu\Delta))$ maps $\nu^r A_r$ to
\[
  e^{\ad (\nu\Delta)}(\nu^rA_r)= \sum_{s=0}^\infty \frac{\nu^{r+s}}{s!} (\ad(\Delta))^s (A_r)\in \Ncal_r.
\]
The action of $\exp(\ad (\nu^{-1}\psi))$ maps $\nu^r A_r$ to
\[
   e^{\ad (\nu^{-1}\psi)}(\nu^rA_r)= \sum_{s=0}^r \frac{1}{s!} (\ad(\nu^{-1}\psi))^s (\nu^rA_r)\in \Ncal_r.
\]
It follows that $e^{\ad(\nu\Delta)}(A)$ and $e^{\ad (\nu^{-1}\psi)}(A)$ are elements of $\Ncal$, because $\Ncal$ is complete with respect to the standard filtration. 
\end{proof}

Now we will give a proof of Theorem \ref{T:foiosc}.
\begin{proof}
Fix local coordinates $\{x^i\}$ around $\x$ such that $x^i(\x)=0$ for all~$i$. Denote by $\mathfrak{b}$ the Lie algebra of operators $A \in \g$ such that $\delta \circ A=0$ and by $\mathfrak{c}$ the Lie algebra of operators from $\g$ with constant coefficients. Then $\g = \mathfrak{b}\oplus\mathfrak{c}$. Let $(\varphi,\rho)$ be a phase-density pair on $M$ with the critical point $\x=0$ and $\chi$ be the corresponding phase remainder. Then (\ref{E:distrlam}) is the unique FOI strongly associated with $(\varphi,\rho)$ and such that $\Lambda_0=\delta$. Lemma \ref{L:conje} implies that
\[
     e^{\ad(\nu\Delta)} (e^{\chi}) \in \exp\g.
\]
By Proposition \ref{P:factor}, there exist unique elements $B \in \mathfrak{b}$ and $C \in \mathfrak{c}$ such that
\begin{equation}\label{E:ebc}
  e^{\ad(\nu\Delta)} (e^{\chi}) = e^B e^C.
\end{equation}
It follows that
\begin{eqnarray*}
   \Lambda(f) = \left(e^{\nu\Delta} e^{\chi} f\right)\big |_{x=0}= \left(e^{\nu\Delta} e^{\chi} e^{-\nu\Delta} e^{\nu\Delta} f\right)\big |_{x=0}=\hskip 1.7cm\\
    \left(e^{\ad(\nu\Delta)} (e^{\chi}) e^{\nu\Delta} f\right)\big |_{x=0}=\left(e^B e^C e^{\nu\Delta} f\right)\big |_{x=0} = \left(e^{\nu\Delta+C} f \right) |_{x=0},
\end{eqnarray*}
where we have used that the operators with constant coefficients $\nu\Delta$ and $C$ commute. The operator $C$ can be written as
\[
   C = \nu^{-1}(X_0 + \nu X_1 + \ldots),
\]
where $X_r$ has constant coefficients, is of order at most $r$, and whose filtration degree is at least $3-2r$ for all $r$. It follows that $X_0=X_1=0$ and $X_2$ is of order at most 1. We see that
\[
   \nu\Delta+C = \nu^{-1} \left(\nu^2(\Delta+ X_2) + \nu^3 X_3 + \nu^4 X_4 + \ldots\right) \in \nu^{-1}\Ncal
\]
and the operator $\Delta+ X_2$ can be written in coordinates as
\begin{equation}\label{E:xtwo}
    - \frac{1}{2} h^{ij} \p_i \p_j + b^i\p_i +c.
\end{equation}
Since the matrix $(h^{ij})$ is nondegenerate, the FOI $\Lambda$ is a nondegenerate oscillatory distribution.

Now suppose that $\Lambda$ is a nondegenerate oscillatory distribution on a manifold $M$ supported at $\x\in M$. Fix local coordinates $\{x^i\}$ around $\x$ such that $x^i(\x)=0$ for all~$i$. According to Proposition \ref{P:osciff}, there exists a unique natural operator with constant coefficients $X = \nu^2 X_2 + \nu^3 X_3 + \ldots$ such that
\[
   \Lambda = \delta \circ \exp (\nu^{-1}X).
\]
If we write $X_2$ as (\ref{E:xtwo}), where $(h^{ij})$ is a symmetric matrix with constant entries, then this matrix is nondegenerate because $\Lambda$ is a nondegenerate oscillatory distribution. We will have that
\[
   C:= \nu^{-1}X + \frac{\nu}{2} h^{ij}\p_i \p_j \in \mathfrak{c}.
\]
Let $(h_{ij})$ be the matrix inverse to $(h^{ij})$. We will use the settings (\ref{E:lapl}) and will show that there exists a $\nu$-formal jet $\chi = \nu^{-1}\chi_{-1} + \chi_0 + \ldots$ at $\x=0$ of positive filtration degree such that (\ref{E:ebc}) holds for some $B \in \mathfrak{b}$. It will mean that $\Lambda$ is a FOI at $\x=0$ strongly associated with the phase-density pair $(\nu^{-1}\psi + \chi, dx)$\footnote{By Borel's lemma it suffices to give only the jet of infinite order of the phase at $\x=0$.}.

Denote by $\mathfrak{e}$ the Lie algebra of operators from $\g$ that can be written as
\[
   A= \p_i \circ A^i
\]
for some formal differential operators $A_i$. If we use the standard transposition $A \mapsto A^t$ of differential operators such that $(\p_i)^t = - \p^t$ and $(x^i)^t = x^i$, then $A \in\mathfrak{e}$ if $A \in \g$ and $A^t$ annihilates constants, $A^t 1=0$. Denote by $\mathfrak{f}$ the Lie algebra of multiplication operators from $\g$. Then $\g = \mathfrak{e}\oplus\mathfrak{f}$. A simple calculation shows that
\[
   e^{-\ad(\nu\Delta)} (x^k) = x^k + \nu h^{kl} \frac{\p}{\p x^l} \mbox{ and } e^{\ad(\nu^{-1}\psi)}e^{-\ad(\nu\Delta)} (x^k) = \nu h^{kl} \frac{\p}{\p x^l}.
\]
Therefore, the conjugation
\[
   A \mapsto e^{\ad(\nu^{-1}\psi)}e^{-\ad(\nu\Delta)} (A)
\]
provides isomorphisms of the Lie algebra $\mathfrak{b}$ onto $\mathfrak{e}$ and of the Lie group $\exp\mathfrak{b}$ onto $\exp\mathfrak{e}$. By Proposition \ref{P:factor}, there exist unique elements $E \in \mathfrak{e}$ and $\chi \in \mathfrak{f}$ such that
\[
     e^{\ad(\nu^{-1}\psi)}\left(e^C\right ) = e^E e^\chi.
\]
Acting on both sides by $\exp(\ad(\nu\Delta))\exp(-\ad(\nu^{-1}\psi))$, we get
\[
    e^C= \left(e^{\ad(\nu\Delta)} e^{\ad(-\nu^{-1}\psi)}\left(e^E\right ) \right) \left(e^{\ad(\nu\Delta)} (e^\chi) \right),
\]
which implies (\ref{E:ebc}) if we set
\[
    B:= - e^{\ad(\nu\Delta)} e^{\ad(-\nu^{-1}\psi)}\left(E\right )   \in \mathfrak{b}.
\]
It completes the proof of the theorem.
\end{proof}
It is interesting to notice that Theorem \ref{T:foiosc} and Proposition 3.1 in \cite{LMP10} imply that if $\Lambda$ is a nondegenerate oscillatory distribution supported at $\x$, then the pairing
\[
    f,g \mapsto \Lambda(fg)
\]
on the space of formal jets $\J[[\nu]]$ is nondegenerate.

One of the consequences of Theorems \ref{T:grosc} and \ref{T:foiosc} is that Fedosov's star product is given by some formal oscillatory integral (the distribution $\Lambda_x$ for a Fedosov's star product is nondegenerate for any $x$ because $C_1(f,g) = \pi^{ij}\p_i f \p_j g$, where $\pi^{ij}$ is a nondegenerate Poisson tensor). However, Fedosov's construction does not use any oscillatory integral formulas. Only in the simplest case of the Moyal-Weyl star product it is given by the asymptotic expansion of a known oscillatory integral (and hence by a formal oscillatory integral).

\end{document}